\documentclass[11pt]{amsart}

\usepackage{graphicx}
\usepackage{mathptmx}

\usepackage{amscd}
\usepackage{amsthm}
\usepackage{amsxtra}
\usepackage{a4wide}
\usepackage{latexsym}
\usepackage{amssymb}
\usepackage{amsfonts}
\usepackage{amsmath}
\usepackage{amsrefs}
\usepackage{mathrsfs}

\usepackage{upref}
\usepackage{txfonts}

\usepackage{todonotes}

% % % % % % % % % % % % % % % % % % % % % % % % % % % % % % % % % % % %

\usepackage[bookmarksnumbered, colorlinks, plainpages]{hyperref}
\hypersetup{colorlinks=true,linkcolor=red, anchorcolor=green, citecolor=cyan, urlcolor=red, filecolor=magenta, pdftoolbar=true}

\allowdisplaybreaks

% % % % % % % % % % % % % % % % % % % % % % % % % %

\theoremstyle{plain}
\newtheorem{thm}{Theorem}[section]
\newtheorem{lem}[thm]{Lemma}
\newtheorem{cor}[thm]{Corollary}

\theoremstyle{definition}
\newtheorem{rem}[thm]{Remark}

\newtheorem{defi}[thm]{Definition}

\numberwithin{thm}{section}
\numberwithin{equation}{section}

%%%%%%%%%%%%%%%%%%%%%%%%%%%%%%%%%%%%%%%%%%%%%%%%%%%%%

\def\loc{\operatorname{loc}}

\def\esup{\operatornamewithlimits{ess\,sup}}
\def\einf{\operatornamewithlimits{ess\,inf}}

\def\R{\mathbb R}

\def\BMO{\operatorname{BMO}}

\def\rn{\R^n}
\def\a{\alpha}
\def\b{\beta}
\def\o{\omega}

\def\la{\lambda}

\def\I{(0,\infty)}

\def\d{\delta}

\def\ls{\lesssim}

\def\R{\mathbb R}

\def\dual{\,^{^{\mathsf{c}}}\!}

\def\Bt {{B(0,t)}}

\def\Bxt {{B(x,t)}}
\def\Br {{B(0,r)}}
\def\Bxr {{B(x,r)}}

\def\Lploc{L_p^{\rm loc}(\rn)}

\def\Llocp{L_1^{\rm loc, +}(\rn)}
\def\Lloc{L_1^{\rm loc}(\rn)}

%%%%%%%%%%%%%%%%%%%%%%%%%%%%%%%%%%%%%%%%%%%%%%%%%%%%%%%%%%%%%%%%%%%%%%%%%%%%%%%%%%%%%%%%%%%%%%%%%%%%%%%%%%%

\begin{document}

\title[]{An extension of Muchenhoupt-Wheeden theorem to generalized weighted (central) Morrey spaces}

\author[]{Rza Mustafayev and Abdulhamit Kucukaslan}

\address{Rza Mustafayev, Department of Mathematics, Faculty of Science, Karamanoglu Mehmetbey University, Karaman, 70100, Turkey}
\email{rzamustafayev@gmail.com}

\address{Abdulhamit Kucukaslan, Czech Academy of Sciences, Institute of Mathematics, Prague, Czech Republic \\
current address: School of Applied Sciences, Pamukkale University, Denizli, Turkey}
\email{aakucukaslan@gmail.com}

\thanks{The research of Abdulhamit Kucukaslan was totally supported by the grant of The Scientific and Technological Research Council of Turkey (TUBITAK), [Grant-1059B191600675-2016-I-2219].}

\subjclass[2010]{42B25, 42B35, 46E30}

\keywords{generalized weighted (central) Morrey spaces, fractional maximal operator, Riesz potential, weight}

\maketitle

\begin{abstract}
In this paper we find the condition on function $\omega$ and weight $v$ which ensures
the equivalency of norms of the Riesz potential and the fractional	maximal function in generalized weighted Morrey spaces ${\mathcal M}_{p,\omega}({\mathbb R}^n,v)$ and generalized weighted central Morrey spaces $\dot{\mathcal M}_{p,\omega}({\mathbb R}^n,v)$, when $v$ belongs to Muckenhoupt $A_{\infty}$-class. 
\end{abstract}

\section{Introduction}

Morrey spaces ${\mathcal M}_{p, \lambda}(\rn)$ were introduced in
\cite{Morrey} and defined as follows: For $\lambda \ge 0$, $1\le p
\le \infty$, $f\in {\mathcal M}_{p, \lambda} (\rn)$ if $f\in \Lploc$ and
$$
\left\| f \right\|_{{\mathcal M}_{p,\lambda }(\rn)}= \sup_{x\in \rn, \; r>0 }
r^{-{\lambda}/{p}} \|f\|_{L_{p}(B(x,r))} <\infty
$$
holds. These spaces appeared to be quite useful in the study of
local behavior of the solutions of partial differential equations.
Later, Morrey spaces found important applications to Navier-Stokes
(\cites{Maz,Tay}) and Schr\"{o}dinger equations
(\cite{RV1,Shen1}), elliptic equations with discontinuous
coefficients (\cite{Caf,FanLuYang,FPR}) and
potential theory (\cite{Adams,Adams1}).

Morrey spaces were widely investigated during last decades,
including the study of classical operators of harmonic analysis such as
maximal, singular and potential operators and their commutators with a measurable functions, in generalizations of
these spaces. We refer to a few works in this direction (see, for instance,
\cite{KomShi,Mus1,SamkoN,PerSam,NakamuraSaw_2017,GKMS}).

We find it convenient to define the generalized weighted Morrey spaces and the generalized weighted central Morrey spaces in the form as follows (cf. \cite{KomShi,Mus1,Nakamura_2016,PersSamWall_2016}).
\begin{defi}
	Let $1 \le p < \infty$ and $\omega(x,r)$ be a positive continuous
	function on $\rn \times (0,\infty)$. Let $v$ be a  weight
	function $\rn$. We denote by ${\mathcal	M}_{p,\omega}(\rn,v)$ the generalized weighted Morrey spaces, the space of all
	functions $f\in L_{p}^{\loc}(\rn,v)$ with finite quasinorm
	$$
	\|f\|_{\mathcal{M}_{p,\omega}(\rn,v)} = \sup\limits_{x\in\rn,\,r>0}
	\omega(x,r) \, \|f\|_{L_{p}(B(x,r),v)}.
	$$
\end{defi}

Recall that ${\mathcal M}_{p,\omega}(\rn,v)$ when $v \equiv 1$ is the generalized Morrey space ${\mathcal M}_{p,\omega}(\rn)$ introduced in \cite{Miz_1990} and \cite{Nakai}. 

\begin{defi}
	Let $1 \le p < \infty$ and $\omega$ be a positive continuous
	function on $(0,\infty)$. Let $v$ be a  weight
	function $\rn$. We denote by $\dot{\mathcal M}_{p,\omega}(v)=\dot{\mathcal
		M}_{p,\omega}(\rn,v)$ the generalized weighted central Morrey space, the space of all
	functions $f\in L_{p,v}^{\loc}(\rn)$ with finite quasinorm
	$$
	\|f\|_{\dot{\mathcal{M}}_{p,\omega}(v)} = \sup\limits_{r>0}
	\omega(r)\|f\|_{L_{p,v}(B(0,r))}.
	$$
\end{defi}
The localized (central) Morrey spaces were considered in \cites{AlLakGuz_2000,GarciaHer_1994} in order to study the relationship between central $\BMO$ spaces and Morrey spaces. The generalized weighted central Morrey spaces are the special case in the scale of the weighted local Morrey-type spaces (see, for instance, \cites{MU_2015,gmu_2017} and references given there). 

$I_{\a}f$ and $M_{\a}f$ denote the Riesz potential
and the fractional maximal function of a nonnegative locally integrable function $f$ on $\rn$, respectively:
\begin{align*}
I_{\a}f (x) & =\int_{\rn} \frac{f(y)\,dy}{|x-y|^{n-\a}},~~ 0<\a<n, \\
\intertext{and}
M_{\a}f(x) & =\sup_{r>0}\frac{1}{|B(x,r)|^{1 - \alpha / n}}\int_{B(x,r)} f(y)\,dy,~~0\leq \a <n.
\end{align*}
Here $\Bxr$ denotes the open ball centered at $x$ of radius $r$ and $|B(x,r)|$ is the Lebesgue measure of $B(x,r)$. For $\a=0$, $M_0 = M$ is the Hardy-Littlewood maximal operator.

Recall that, for $0<\a<n$ there is a constant $C > 0$ such that the inequality
\begin{equation}\label{eq001}
M_{\a}f(x)\le C\, I_{\a}f (x)
\end{equation}
holds for any nonnegative locally integrable function $f$ on $\rn$ and $x\in \rn$. The opposite inequality is in general false. We recall the following theorem of B.~Muckenhoupt and R.L.~Wheeden.
\begin{thm}\cite[Theorem 1]{MuckWheeden}
	Let $0 < p < \infty$, $0 < \alpha < n$ and $v \in A_{\infty}$. Then there is a constant $C > 0$ such that the inequalities
	$$
	C^{-1} \|M_{\alpha} f \|_{L_p(\rn,v)} \le \|I_{\alpha} f \|_{L_p(\rn,v)} \le C \|M_{\alpha} f \|_{L_p(\rn,v)}
	$$
	hold for any nonnegative locally integrable function $f$ on $\rn$.
\end{thm}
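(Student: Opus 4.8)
The left-hand inequality requires no hypotheses beyond \eqref{eq001}: since $M_\alpha f(x)\le C\,I_\alpha f(x)$ pointwise, raising to the power $p$ and integrating against $v\,dx$ gives $\|M_\alpha f\|_{L_p(\rn,v)}\le C\|I_\alpha f\|_{L_p(\rn,v)}$ for every $p$ and every $v$. All the content is in the reverse inequality $\|I_\alpha f\|_{L_p(\rn,v)}\le C\|M_\alpha f\|_{L_p(\rn,v)}$, and the plan is to prove it through a good-$\lambda$ inequality, a method that has the advantage of covering the full range $0<p<\infty$, including $p<1$, at once.

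Concretely, I would establish that there are constants $C>0$ and $\delta>0$, depending only on $n$, $\alpha$ and the $A_\infty$ constant of $v$, so that for every $\lambda>0$ and every sufficiently small $\gamma>0$
$$v\big(\{x: I_\alpha f(x)>2\lambda,\ M_\alpha f(x)\le\gamma\lambda\}\big)\le C\gamma^{\delta}\,v\big(\{x: I_\alpha f(x)>\lambda\}\big).$$
Granting this, one writes $\|I_\alpha f\|_{L_p(\rn,v)}^p=p\int_0^\infty\lambda^{p-1}v(\{I_\alpha f>\lambda\})\,d\lambda$, splits $v(\{I_\alpha f>2\lambda\})\le v(\{I_\alpha f>2\lambda,\ M_\alpha f\le\gamma\lambda\})+v(\{M_\alpha f>\gamma\lambda\})$, and integrates to obtain $\|I_\alpha f\|_{L_p(\rn,v)}^p\le C\gamma^{\delta}\|I_\alpha f\|_{L_p(\rn,v)}^p+C_\gamma\|M_\alpha f\|_{L_p(\rn,v)}^p$. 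Fixing $\gamma$ small enough that $C\gamma^{\delta}<2^{-p}$ lets the first term be absorbed on the left, yielding the reverse inequality; the a priori finiteness of $\|I_\alpha f\|_{L_p(\rn,v)}$ needed for this absorption is secured by a routine truncation of $f$ followed by a limiting argument.

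To prove the good-$\lambda$ inequality I would decompose the open set $\Omega_\lambda=\{x: I_\alpha f(x)>\lambda\}$ into the Whitney cubes $\{Q_j\}$, which are disjoint, exhaust $\Omega_\lambda$, and have side-lengths comparable to their distance from $\rn\setminus\Omega_\lambda$. Fix $Q=Q_j$ and assume $E_Q=\{x\in Q: I_\alpha f(x)>2\lambda,\ M_\alpha f(x)\le\gamma\lambda\}$ is nonempty, as otherwise there is nothing to estimate. Writing $f=f_1+f_2$ with $f_1=f\chi_{cQ}$ for a fixed dilate, the Whitney property supplies a point $y_Q$ close to $Q$ with $I_\alpha f(y_Q)\le\lambda$, while the nonemptiness of $E_Q$ supplies a point $x_Q\in Q$ with $M_\alpha f(x_Q)\le\gamma\lambda$, which bounds the averages of $f$ over balls comparable to $cQ$. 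Together these force $I_\alpha f_2(x)\le(1+C\gamma)\lambda$ throughout $Q$, so on $E_Q$ one has $I_\alpha f_1(x)>(1-C\gamma)\lambda>\tfrac12\lambda$ for $\gamma$ small. Since $\int_{cQ}f\le C\gamma\lambda\,|cQ|^{1-\alpha/n}$, the weak type $(1,n/(n-\alpha))$ bound for $I_\alpha$ applied to $f_1$ gives the \emph{Lebesgue}-measure estimate
$$|E_Q|\le\big|\{x\in Q: I_\alpha f_1(x)>\tfrac12\lambda\}\big|\le C\gamma^{n/(n-\alpha)}|Q|\le C\gamma\,|Q|.$$

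The final step converts this into a weighted estimate through the quantitative form of the $A_\infty$ condition: there exist $C,\delta>0$ with $v(E)/v(Q)\le C(|E|/|Q|)^{\delta}$ for every cube $Q$ and measurable $E\subset Q$. Applied to $E_Q$ this gives $v(E_Q)\le C\gamma^{\delta}v(Q)$, and summing over the disjoint Whitney cubes yields $\sum_j v(E_{Q_j})\le C\gamma^{\delta}\sum_j v(Q_j)=C\gamma^{\delta}v(\Omega_\lambda)$, which is exactly the claimed good-$\lambda$ inequality. I expect the main obstacle to lie in the local Lebesgue estimate of the previous paragraph: one must arrange the near/far splitting so that $I_\alpha f_2$ is genuinely comparable to $\lambda$ on all of $Q$ and extract the correct power of $\gamma$ from the single good point $x_Q$ via the weak type bound. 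The passage from Lebesgue to $v$-measure, where the $A_\infty$ hypothesis is used in an essential way, and the concluding integration in $\lambda$ are comparatively routine.
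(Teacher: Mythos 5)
This theorem is not proved in the paper at all: it is quoted verbatim from Muckenhoupt and Wheeden \cite{MuckWheeden} as a known classical result, so there is no internal proof to compare against. What you have written is, in essence, a correct reconstruction of the original Muckenhoupt--Wheeden argument: the good-$\lambda$ inequality over a Whitney decomposition of $\{I_{\a}f>\lambda\}$, the near/far splitting $f=f\chi_{cQ}+f\chi_{\rn\setminus cQ}$ controlled by one ``good'' point for $I_{\a}$ and one for $M_{\a}$, the weak type $(1,n/(n-\a))$ bound on the local piece, and the quantitative $A_{\infty}$ condition to pass from Lebesgue to $v$-measure. Two spots deserve more explicit care than your sketch gives them. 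First, the bound $I_{\a}f_2(x)\le(1+C\gamma)\lambda$ on $Q$ does not follow merely from averages of $f$ over balls ``comparable to $cQ$'': you must estimate the kernel difference $|x-y|^{\a-n}-|y_Q-y|^{\a-n}\ls \ell(Q)\,|y_Q-y|^{\a-n-1}$ and sum over dyadic annuli, which uses $M_{\a}f(x_Q)$ through averages over balls of \emph{all} radii $\gtrsim\ell(Q)$; the geometric series in $2^{-k}$ is what produces the factor $\gamma\lambda$. Second, the a priori finiteness needed for the absorption is slightly less routine than ``truncate $f$'': for small $p$ and rapidly growing $v\in A_{\infty}$ one can have $\|I_{\a}f\|_{L_p(\rn,v)}=\infty$ even for bounded compactly supported $f$. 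The correct reduction is to observe that if $\|M_{\a}f\|_{L_p(\rn,v)}<\infty$ and $f\not\equiv 0$ then necessarily $\int_{|x|>1}|x|^{(\a-n)p}v(x)\,dx<\infty$, and for bounded compactly supported $f$ this, combined with $I_{\a}f(x)\ls|x|^{\a-n}\|f\|_{L_1(\rn)}$ at infinity and local boundedness, yields $\|I_{\a}f\|_{L_p(\rn,v)}<\infty$; the general case then follows by monotone convergence. With these two points made precise, your proof is complete and is exactly the argument of the cited source.
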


In \cite{AdXi} D.R.~Adams and J.~Xiao proved the following theorem.
\begin{thm}[\cite{AdXi}, Theorem 4.2]\label{thmAdXi}
	Let $1<p<\infty$, $0<\a<n$ and $0\leq\la< n$. Then there is a constant $C > 0$ such that the inequalities
	\begin{equation*}
	C^{-1} \|M_{\a}f\|_{{\mathcal M}_{p,\la }(\rn)} \le \|I_{\a}f\|_{{\mathcal M}_{p,\la }(\rn)} \le C\|M_{\a}f\|_{{\mathcal M}_{p,\la }(\rn)}
	\end{equation*}
	hold for any nonnegative locally integrable function $f$ on $\rn$.
\end{thm}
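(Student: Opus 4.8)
\emph{The plan.} The left-hand inequality is immediate: by \eqref{eq001} we have the pointwise bound $M_\a f(x)\le C\,I_\a f(x)$, and since the Morrey quasinorm is monotone under pointwise domination of nonnegative functions, this yields $\|M_\a f\|_{{\mathcal M}_{p,\la}(\rn)}\le C\|I_\a f\|_{{\mathcal M}_{p,\la}(\rn)}$ at once. The whole content is therefore the reverse inequality $\|I_\a f\|_{{\mathcal M}_{p,\la}(\rn)}\le C\|M_\a f\|_{{\mathcal M}_{p,\la}(\rn)}$. To prove it I would fix a ball $B=B(x_0,r)$, estimate $r^{-\la/p}\|I_\a f\|_{L_p(B)}$ and finally take the supremum over all balls. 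The starting move is the standard splitting $f=f_1+f_2$ with $f_1=f\chi_{2B}$, $2B=B(x_0,2r)$, and $f_2=f\chi_{\rn\setminus 2B}$, treating the resulting local and global parts separately; note that, because I bound each piece directly, no absorption argument (and hence no a priori finiteness) is needed.

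For the \emph{global part} I would exploit that $I_\a f_2$ is essentially constant on $B$: for $x\in B$ and $y\notin 2B$ one has $|x-y|\ge\tfrac12|y-x_0|$, whence $I_\a f_2(x)\le C\,I_\a f_2(x_0)$. Decomposing $\rn\setminus 2B$ into dyadic annuli and writing $B_k=B(x_0,r_k)$ with $r_k=2^{k+2}r$, the elementary bound $\int_{B_k}f\le C\,r_k^{\,n-\a}\,M_\a f(y)$, valid for every $y\in B_k$ because $B_k\subset B(y,2r_k)$, gives after averaging the $p$-th power over $B_k$ and inserting the Morrey norm that $\int_{B_k}f\le C\,r_k^{\,n-\a+(\la-n)/p}\|M_\a f\|_{{\mathcal M}_{p,\la}(\rn)}$. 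Summing $I_\a f_2(x_0)\ls\sum_k r_k^{-(n-\a)}\int_{B_k}f$ then leaves a geometric series with ratio $2^{(\la-n)/p}$, which converges precisely because $\la<n$; this produces $I_\a f_2(x)\le C\,r^{(\la-n)/p}\|M_\a f\|_{{\mathcal M}_{p,\la}(\rn)}$ for all $x\in B$, and hence $r^{-\la/p}\|I_\a f_2\|_{L_p(B)}\le C\|M_\a f\|_{{\mathcal M}_{p,\la}(\rn)}$.

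The \emph{local part} is the crux, and I expect it to be the main obstacle. The naive idea is to invoke the (unweighted) Muckenhoupt--Wheeden theorem, $\|I_\a f_1\|_{L_p(B)}\le\|I_\a f_1\|_{\lp}\le C\|M_\a f_1\|_{\lp}$; but the tail of $M_\a f_1$ decays only like $|x-x_0|^{-(n-\a)}$, so $M_\a f_1\notin\lp$ when $p\le n/(n-\a)$ and this global estimate is simply unavailable for small $p$. To get around this I would apply Muckenhoupt--Wheeden not with Lebesgue measure but with the power-type weight $w(x)=(1+|x-x_0|/r)^{-N}$, which lies in $A_1\subset A_\infty$ with constant independent of $x_0,r$ as soon as $0\le N<n$, and which satisfies $w\ap 1$ on $B$. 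Choosing $N$ in the nonempty range $\max(0,\,n-(n-\a)p)<N<n$ makes the far tail integrable. Then $\int_B(I_\a f_1)^p\le C\int_{\rn}(I_\a f_1)^p\,w\le C\int_{\rn}(M_\a f_1)^p\,w$, and splitting the last integral into a part over a fixed dilate of $B$ (controlled directly by the Morrey norm) and a far part (controlled by the tail decay of $M_\a f_1$ together with $\int_{2B}f\le C\,r^{\,n-\a+(\la-n)/p}\|M_\a f\|_{{\mathcal M}_{p,\la}(\rn)}$) yields $r^{-\la/p}\|I_\a f_1\|_{L_p(B)}\le C\|M_\a f\|_{{\mathcal M}_{p,\la}(\rn)}$.

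Adding the two contributions and taking the supremum over all balls $B$ then gives the claimed inequality. Throughout, the delicate points to monitor are that every constant is independent of $x_0$ and $r$ — which is exactly why the weight $w$ must have an $A_\infty$ constant uniform in the ball, guaranteed by its translation/dilation structure — and that the exponent bookkeeping $n-\a+(\la-n)/p$ matches in the local and the global estimate. The one genuine difficulty is the breakdown of the global $L_p$-bound for the local part at small $p$, and it is resolved entirely by passing to the weighted Muckenhoupt--Wheeden theorem with the weight $w$ above.
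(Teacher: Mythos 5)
Your proof is correct, but it takes a genuinely different route from the paper. In the paper this statement is quoted from \cite{AdXi} and is recovered as the special case $\o(x,r)=r^{-\la/p}$, $v\equiv 1$ of Theorem \ref{equiv}: the engine there is the two-sided local estimate of Theorem \ref{thm4.1}, namely $\|I_{\a}f\|_{L_{p}(Q,v)}\thickapprox\|M_{\a}f\|_{L_{p}(Q,v)}+v(Q)^{1/p}\int_{\rn\setminus Q}f(y)\,|y-x_0|^{\a-n}\,dy$, proved via Adams' pointwise bound $(I_{\a}f)^{\#}\ls M_{\a}f$ combined with the local sharp-maximal-function inequality of Lemma \ref{lem876346754}; the residual tail term is then absorbed into $\sup_{t>r}t^{\a-n}\int_{B(x,t)}f$ (Lemma \ref{lem4.35574943}) by means of the two-operator Hardy-type inequality characterized in Section \ref{Sect5}, whose condition for $\o(r)=r^{-\la/p}$ reduces to exactly the convergence furnished by $\la<n$. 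You instead split $f$ relative to each ball, handle the far part by dyadic annuli (where $\la<n$ enters as a convergent geometric series, which parallels the paper's tail estimate in spirit), and for the near part replace the sharp-function machinery by a direct application of the weighted Muckenhoupt--Wheeden theorem with the localizing weight $(1+|x-x_0|/r)^{-N}$, $\max(0,\,n-(n-\a)p)<N<n$, whose $A_1$ constant is translation- and dilation-invariant and hence uniform over all balls. This device correctly circumvents the failure of the unweighted global $L_p$ bound for $p\le n/(n-\a)$, and your exponent bookkeeping (the quantity $n-\a+(\la-n)/p$ appearing in both parts) checks out; the one point worth making explicit is that the constant in the Muckenhoupt--Wheeden theorem depends only on the $A_\infty$ (here $A_1$) characteristic of the weight, which you do flag. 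In short, your argument is more elementary and self-contained for the power-weight case, while the paper's machinery buys general $\o$ and general $v\in A_{\infty}$ at the cost of the local sharp maximal function and the abstract two-operator inequality.
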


The following theorem was proved in \cite{gm}.
\begin{thm}[\cite{gm}, Theorem 1.4]\label{thmGM}
	Let $1<p<\infty$, $0<\a<n$ and $\o$ be a continuous weight function defined
	on $(0,\infty)$. If 
	\begin{equation*}
	\sup_{r > 0} r^{n - \a}\left(\sup_{r<s<\infty}s^{\a-n}\sup_{0<\tau<s}\o(\tau)\tau^{{n} / {p}}\right)\int_r^{\infty}t^{\a-n-1} \left(\sup_{t<s<\infty}s^{\a-n}\sup_{0<\tau<s}\o(\tau)\tau^{{n} / {p}}\right)^{-1}dt < \infty,
	\end{equation*}
	then there is a constant $C > 0$ such that  the inequalities
	\begin{equation*}
	C^{-1} \|M_{\a}f\|_{{\mathcal M}_{p,\o }(\rn)} \le \|I_{\a}f\|_{{\mathcal M}_{p,\o }(\rn)} \le C	\|M_{\a}f\|_{{\mathcal M}_{p,\o }(\rn)}
	\end{equation*}
	hold for any nonnegative locally integrable function $f$ on $\rn$.
\end{thm}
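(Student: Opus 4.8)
The left-hand inequality is immediate: since $M_\alpha f(x)\le C\,I_\alpha f(x)$ pointwise by \eqref{eq001} and the quasinorm of $\mathcal{M}_{p,\omega}(\rn)$ is monotone, we obtain $\|M_\alpha f\|_{\mathcal{M}_{p,\omega}(\rn)}\le C\|I_\alpha f\|_{\mathcal{M}_{p,\omega}(\rn)}$ with no restriction on $\omega$ whatsoever. The entire content of the theorem is therefore the reverse inequality, and the hypothesis on $\omega$ is consumed entirely there. The plan is to fix a ball $B=B(x_0,r)$, split $f=f_1+f_2$ with $f_1=f\chi_{2B}$ and $f_2=f\chi_{\rn\setminus 2B}$, estimate $\omega(r)\|I_\alpha f_1\|_{L_p(B)}$ and $\omega(r)\|I_\alpha f_2\|_{L_p(B)}$ separately, and then take the supremum over $x_0$ and $r$.

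For the \emph{near part} I would not attempt to bound $I_\alpha f_1$ pointwise by $M_\alpha f$: the honest pointwise estimate, obtained from a dyadic decomposition of the annuli of $B(x,3r)\supset 2B$, only gives $I_\alpha f_1(x)\lesssim r^{\alpha}Mf(x)$ for $x\in B$, and $r^\alpha Mf$ cannot be dominated by $M_\alpha f$ pointwise (a concentrated example defeats it). The passage from $M$ to $M_\alpha$ is genuinely an $L_p$ phenomenon, so I would invoke a localized form of the Muckenhoupt--Wheeden theorem above in its unweighted case $v\equiv1$, namely $\|I_\alpha f_1\|_{L_p(B)}\lesssim\|M_\alpha f_1\|_{L_p(cB)}\le\|M_\alpha f\|_{L_p(cB)}$, which can be produced by integrating the good-$\lambda$ inequality over $cB$; the localization is essential because the \emph{global} bound would involve $\|M_\alpha f_1\|_{L_p(\rn)}$, whose tail of order $|x-x_0|^{\alpha-n}\int_{2B}f$ fails to be $p$-integrable when $p\le n/(n-\alpha)$. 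Comparing $\omega(r)\|M_\alpha f\|_{L_p(cB)}$ with $\|M_\alpha f\|_{\mathcal{M}_{p,\omega}(\rn)}$ then costs only a regularity factor $\omega(r)/\omega(cr)$, which I expect the hypothesis to control.

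The \emph{far part} is the heart of the matter. For $x\in B$ and $y\notin 2B$ one has $|x-y|\approx|x_0-y|$, so $I_\alpha f_2(x)\approx\int_{\rn\setminus 2B}|x_0-y|^{\alpha-n}f(y)\,dy$ is essentially constant in $x\in B$; writing $g(s)=\int_{B(x_0,s)}f$, a Fubini/layer-cake computation gives $\|I_\alpha f_2\|_{L_p(B)}\approx r^{n/p}\int_{2r}^\infty s^{\alpha-n-1}g(s)\,ds$. The link to $M_\alpha f$ comes from the elementary bound $M_\alpha f(z)\gtrsim s^{\alpha-n}g(s)$, valid for every $z\in B(x_0,\tau)$ whenever $\tau\le s$ (since then $B(x_0,s)\subset B(z,2s)$); taking the $L_p$-average over $B(x_0,\tau)$ and optimizing in $\tau$ yields $g(s)\lesssim s^{n-\alpha}\,W(s)^{-1}\|M_\alpha f\|_{\mathcal{M}_{p,\omega}(\rn)}$, where $W(s)=\sup_{0<\tau<s}\omega(\tau)\tau^{n/p}$. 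What remains is the purely one-dimensional estimate $\sup_{r>0}\omega(r)\,r^{n/p}\int_{2r}^\infty s^{\alpha-n-1}g(s)\,ds\lesssim\|M_\alpha f\|_{\mathcal{M}_{p,\omega}(\rn)}$, that is, a weighted Hardy-type inequality on $(0,\infty)$ for the nondecreasing function $g$ subject to the constraint just derived.

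The main obstacle is to carry out this last step \emph{sharply}, rather than by the crude term-by-term substitution $g(s)\lesssim s^{n-\alpha}W(s)^{-1}$, which loses too much and produces a condition phrased through $W$ instead of the displayed one. The correct formulation retains the full strength of the constraint that $\|M_\alpha f\|_{\mathcal{M}_{p,\omega}(\rn)}$ imposes on the admissible $g$ and recognizes the resulting bound as a weighted Hardy inequality whose boundedness is characterized, through the standard supremum-operator and Muckenhoupt--Tomaselli theory, by precisely the quantity in the hypothesis; this is where the non-increasing envelope $V(r)=\sup_{r<s<\infty}s^{\alpha-n}W(s)$ enters, and why the condition takes the form $\sup_{r>0}r^{n-\alpha}V(r)\int_r^\infty t^{\alpha-n-1}V(t)^{-1}\,dt<\infty$. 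I expect matching this Hardy characterization to the displayed expression, together with the localization of Muckenhoupt--Wheeden and the $\omega$-regularity required in the near part, to be the only genuinely delicate points of the argument.
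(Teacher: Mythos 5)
Your outline follows essentially the same route the paper takes: the paper obtains this statement as the special case $v\equiv 1$ of Theorem \ref{equiv}, whose proof combines the pointwise bound \eqref{eq001} for one direction, a local two-sided estimate tying $\|I_\a f\|_{L_p}$ over a ball to $\|M_\a f\|_{L_p}$ over that ball plus a tail integral (Theorem \ref{thm4.1}), and a supremum-type two-operator Hardy inequality for the tail (Corollary \ref{lem5.5.0}) whose characterization is exactly the displayed condition, with the envelopes $W$ and $V$ entering precisely as you describe. So the architecture is right. However, as written the plan contains one step that would fail as justified and defers the two hardest steps.

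The step that fails: in the near part you pass from $\o(r)\|M_\a f\|_{L_p(cB)}$ to $\|M_\a f\|_{\mathcal M_{p,\o}(\rn)}$ at the cost of a factor $\o(r)/\o(cr)$ and assert that the hypothesis controls it. It does not: the displayed condition constrains only the monotone envelope $V(r)=\sup_{r<s<\infty}s^{\a-n}\sup_{0<\tau<s}\o(\tau)\tau^{n/p}$, which is blind to downward oscillations of $\o$ (insert deep, sparse downward spikes into an admissible $\o$: the hypothesis is unchanged while $\o(r)/\o(cr)$ becomes unbounded). The repair is standard and needs no regularity of $\o$: cover $B(x_0,cr)$ by $N(n,c)$ balls of radius $r$ and use that the Morrey norm is a supremum over all centres at fixed radius, so $\o(r)\|M_\a f\|_{L_p(cB)}\le N^{1/p}\|M_\a f\|_{\mathcal M_{p,\o}(\rn)}$; the paper avoids the issue altogether by never enlarging the ball, since in Theorem \ref{thm4.1} both $\|M_\a f\|_{L_p(Q,v)}$ and the tail over $\rn\setminus Q$ live on the same $Q$, the near contribution $|Q|^{\a/n-1}\|f\|_{L_1(2Q)}$ being absorbed into $\inf_Q M_\a f$. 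Beyond this, the two points you flag as delicate are not loose ends but the actual content of the argument: a localized Muckenhoupt--Wheeden obtained ``by integrating the good-$\lambda$ inequality over $cB$'' glosses over the starting term that any localization produces (the analogue of $v(Q)^{1/p}(I_\a f)_Q$, which the paper controls via Lemma \ref{lem32745627} and the $L_1$ bound \eqref{eq.main_1}), and the matching of the Hardy-type characterization to the displayed expression is precisely Theorem \ref{lem5.5} together with Corollary \ref{cor5.5}. Until those two steps are executed, the proposal is a correct plan rather than a proof.
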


The aim of this paper is to extend Theorem \ref{thmGM} to generalized weighted Morrey spaces $\mathcal{M}_{p,\omega}(\rn,v)$ (see, Theorem \ref{equiv}) and generalized weighted central Morrey space $\dot{\mathcal M}_{p,\o}(\rn,v)$ (see, Theorem \ref{equiv_2}), when the weight function $v$ belongs to Muckenhoupts $A_{\infty}$-class. 

In order to solve this task, firstly, we study relation between weighted Lebesgue norms of $I_{\alpha}$ and $M_{\alpha}$ over cubes in $\rn$: we show that if $1<p<\infty$, $0<\a<n$ and $v\in A_{\infty}$, then the two side estimate 
\begin{equation}\label{eq.equiv}
\|I_{\a}f\|_{L_{p}(Q,v)} \thickapprox \|M_{\a}f\|_{L_{p}(Q,v)} + v(Q)^{{1} / {p}}\int_{\rn\backslash Q}\frac{f(y)dy}{|y-x_0|^{n-\a}},
\end{equation}
hold for all $f\in \Llocp$ and for any cube $Q=Q(x_0,r_0)$ with constants independent of $Q$ and $f$. Recall that this estimate is a generalization to the weighted case of \cite[Theorem 1.10]{gm}.

As we shall see, afterwards, to achieve our main goal, we need a solution for the two-operator weighted norm inequality
\begin{equation}\label{eq3248579857610}
\sup_{ r>0 }u(r)\int_{\rn\backslash	\Br}\frac{g(y)}{|y|^{\b}}dy \lesssim \sup_{r>0}u(r) \left(\sup_{t>r}t^{-\b}\int_{\Bt} g(y)dy\right).
\end{equation}
The study of inequality \eqref{eq3248579857610} has independent
interest (For the history of such type inequalities in 1-dimensional and n-dimensional cases we refer to \cite{gmu_CMJ} and \cite{gmu_2017}, respectively). 
Inequality \eqref{eq3248579857610} is a special case (when $p_1 = p_2 = 1$, $q_1 = q_2 = \infty$, $v_1(y) \equiv 1 $, $v_2 (y) = |y|^{-\beta}$, $u_1 (t) = u_2(t) = u(t)$) of the inequality
\begin{align}\label{eq.EMJ_2017}
\bigg\| \big\|f\big\|_{L_{p_2}({\,^{^{\bf c}}\!}B(0,\cdot),v_2)} \bigg\|_{L_{q_2}((0,\infty),u_2)} \le c \,\bigg\| \big\|f\big\|_{L_{p_1}(B(0,\cdot),v_1)} \bigg\|_{L_{q_1}((0,\infty),u_1)}. 
\end{align}
Inequality \eqref{eq.EMJ_2017} was studied in \cite{gmu_2017}, when $p_1,\,p_2,\,q_1,\,q_2 \in (0,\infty)$, $p_2 \le q_2$ and $u_1,\,u_2$ and $v_1,\,v_2$ are weights on $(0,\infty)$ and ${\mathbb R}^n$, respectively. In this paper we give solution of the missing case, when $q_1 = q_2 = \infty$.

Using inequalities \eqref{eq.equiv}, the characterization of \eqref{eq3248579857610} allows us to formulate a sufficient condition ensuring the equivalency of norms of $I_{\alpha}$ and $M_{\alpha}$ in generalized weighted Morrey spaces ${\mathcal M}_{p,\omega}({\mathbb R}^n,v)$, when $1<p<\infty$, $0<\a<n$ and $v \in A_{\infty}$. 	If, moreover, $v\in RD_{q(1-\a/n)}$, with some $p < q$, then we present a necessary and sufficient condition for the equivalency of norms of $I_{\alpha}$ and $M_{\alpha}$ in generalized weighted central Morrey spaces $\dot{\mathcal M}_{p,\o}(\rn,v)$.

The paper is organized as follows. We start with some notations and preliminaries in Section~\ref{pre}. In Section \ref{FrMax}, we present some weighted $L_p$-estimates for fractional maximal functions over balls.  Relation between $\|I_{\a}f\|_{L_{p}(Q,v)}$ and $\|M_{\a}f\|_{L_{p}(Q,v)}$ was investigated in Section \ref{Sect3}. In Section \ref{Sect5}, we give necessary and sufficient
condition for \eqref{eq3248579857610} to hold. Finally, in Section \ref{Sect6}, we present the condition on $\o$ and $v$ which ensures equivalence of norms of Riesz potential and fractional maximal function in generalized weighted Morrey spaces $\mathcal M_{p,\o}(\rn,v)$  and generalized weighted central Morrey spaces $\dot{\mathcal M}_{p,\o}(\rn,v)$.

%%%%%%%%%%%%%%%%%%%%%%%%%%%%%%%%%%%%%%%%%%%%%%%%%%%%%%%%%%%%%%%%%%%%%%%%%%%%%%%%%%%%%%%%%%%%%%%

\

\section{Notations and Preliminaries}\label{pre}

\

We make some conventions. Throughout the paper, we always denote by
$c$ and $C$ a positive constant which is independent of main
parameters, but it may vary from line to line. By $A\ls B$ we mean
that $A\le CB$ with some positive constant $C$ independent of
appropriate quantities. If $A\ls B$ and $B\ls A$, we write $A\approx
B$ and say that $A$ and $B$ are  equivalent. Constant, with
subscript such as $c_1$, does not change in different occurrences.
For a measurable set $E$, $\chi_E$ denotes the characteristic
function of $E$. Recall that $f_E$ denotes the mean value
$f_E=(1/|E|)\int_Ef(y)dy$ of an integrable function $f$ over a set
$E$ of positive finite measure. Given $\la>0$ and a cube $Q$, $\la
Q$ denotes the cube with the same center as $Q$ and whose side is
$\la$ times that of $Q$.

Let $\Omega$ be any measurable subset of $\rn$, $n\geq 1$. Let
${\mathfrak M} (\Omega)$ denote the set of all measurable functions on $\Omega$
and ${\mathfrak M}_0 (\Omega)$ the class of functions in ${\mathfrak M} (\Omega)$ that
are finite a.e. The symbol ${\mathfrak M}^+ (\Omega)$ stands for the
collection of all $f \in {\mathfrak M} (\Omega)$ which are non-negative on
$\Omega$. 

A weight is a locally integrable function on $\rn$ which takes
values in $(0,\infty)$ almost everywhere. With any weight function
$v$ we associate the measure  $v(E)=\int_{E}v(x)dx$. Given a weight $v$, we say that $v$
satisfies the doubling condition if there exists a constant $D>0$
such that for any cube $Q$, we have $v(2Q)\leq D v(Q)$. When $v$
satisfies this condition, we write $v\in \mathcal D$, for short.

For $p\in (0,\infty]$ and $v\in {\mathfrak M}^+(\Omega)$, we define the functional
$\|\cdot\|_{L_{p}(\Omega,v)}$ on ${\mathfrak M} (\Omega)$ by
\begin{equation*}
\|f\|_{L_{p}(\Omega,v)} : = \left\{\begin{array}{cl}
\left(\int_{\Omega} |f(x)|^p v(x)\,dx \right)^{1/p} & \qquad \mbox{if} \qquad p<\infty, \\
\esup_{x \in \Omega} |f(x)|v(x) & \qquad \mbox{if} \qquad p=\infty.
\end{array}
\right.
\end{equation*}

If, in addition, $v$ is a weight function on $\Omega$, then the weighted Lebesgue space
$L_{p}(\Omega,v)$ is given by
\begin{equation*}
L_{p}(\Omega,v) = \{f\in {\mathfrak M} (\Omega):\,\, \|f\|_{L_{p}(\Omega,v)} <
\infty\}
\end{equation*}
and it is equipped with the quasi-norm $\|\cdot\|_{L_{p}(\Omega,v)}$.

When $v\equiv 1$ on $\Omega$, we write simply $L_p(\Omega)$ and
$\|\cdot\|_{L_p(\Omega)}$ instead of $L_{p}(\Omega,v)$ and
$\|\cdot\|_{L_{p}(\Omega,v)}$, respectively.

By $L_{p}^{\loc}(\rn,v)$ we denote the set of all $f \in {\mathfrak M} (\rn)$ such that $f \in L_{p}(K,v)$ for each compact subset $K$ of $\rn$.
Let us denote by  $L_{p}^{\loc,+}(\rn,v)$ the set of all non-negative
functions from $L_{p}^{\loc}(\rn,v)$.

We say that a weight $v$ satisfies Muckenhoupt's $A_p$-condition \cite{Muck} if there
exists a constant $C > 0$ such that, for any cube $Q$,
$$
\left(\int_Q v(x)dx\right)\left(\int_Q
v(x)^{1-p'}dx\right)^{p-1}\leq C|Q|^p,
$$
where $1/p+1/p'=1$. 

We say that a weight function $v$ satisfies Muckenhoupt's $A_1$-condition \cite{Muck} if there exists $C > 0$ such that
$$
\frac{1}{|Q|} \int_Q v(x)\,dx \le C \, \einf_{y \in Q} v(y)
$$
for all cubes $Q \subset \rn$. It is easy to see \cite[p. 389]{GR} that $v \in A_1$ is equivalent to the requirement that
$$
Mv(x) \le C \, v(x) \quad a.a. \,\, x \in \rn.
$$

The class $A_{\infty}$ is the union $\bigcup_{1 \le p < \infty} A_p$.
Equivalently, $v\in A_{\infty}$ if and only if there exists two
constants $0<\d\leq 1$ and $C>0$ such that for every cube $Q$ and
every measurable set $E\subset Q$
$$
\frac{v(E)}{v(Q)}\leq C \left( \frac{|E|}{|Q|}\right)^{\d}
$$
holds. 

As references for the $A_p$ classes we give \cite{Muck,GR,graf}.

We say that a weight $v$ satisfies the reverse doubling condition of
order $\b$ with $0<\b<\infty$ if
$$
\frac{v(B')}{v(B)} \leq c
\left(\frac{|B'|}{|B|}\right)^{\b} \qquad \mbox{for all balls} \quad
B'\subset B,
$$
and write $v\in RD_{\b}$ in this case (see, \cites{Perez,SawWheed_1992}). 

A weıght $v$ satisfies $\beta$-dimensional $A_{\infty}$ condition  ($v \in A_{\infty}^{\beta}$) if there are positive constants $C,\,\delta$ such that
$$
\frac{v(E)}{v(Q)} \le C\, \left( \frac{\|E\|_{\beta,Q}}{|Q|^{\beta}}\right)^{\delta},
$$
whenever $E$ is a measurable subset of a cube $Q$ in $\rn$. Here
$$
\|E\|_{\beta,Q} : = \inf \left\{ \sum_{i} |Q_i|^{\beta} :\, E \subset \bigcup_i Q_i \subset Q \right\}. 
$$
Recall that $RD_{\b} \subset A_{\infty}^{\beta}$ and $A_{\infty}^1 = A_{\infty}$ (see, \cite[p. 818]{SawWheed_1992}).

We recall the definitions of standart harmonic analysis tools, such as the sharp maximal function and so-called local sharp maximal function. 

The Fefferman-Stein \cite{fefstein_1972} maximal function $f^{\#}$ and the John-Str\"omberg \cites{john_1965,strom_1979} maximal function $M_{\lambda}^{\#}f$ defined for a measurable function $f$ and $x \in \rn$ by
\begin{align*}
f^{\#}(x) : & = \sup_{x \in Q} \, \inf_{c \in \mathbb C} \, \frac{1}{|Q|}\int_{Q} |f(y)-c|\,dy, \\
\intertext{and} 
M_{\lambda}^{\#}f (x) : & = \sup_{x \in Q} \, \inf_{c \in \mathbb C} \, ((f-c)\chi_{Q})^* (\lambda |Q|), \quad 0 < \lambda \le 1,
\end{align*}
respectively, where the supremum is taken over all cubes $Q$ containing $x$. 

Recall that the space $\BMO (\rn)$ \cite{johnNir_1961} is directly generated by $f^{\#}$:
$$
\|f\|_{\BMO (\rn)} = \|f^{\#}\|_{L_{\infty}(\rn)},
$$
while $M_{\lambda}^{\#}f$ gives an alternative characterization of $\BMO (\rn)$:
\begin{equation}\label{eq.BMO_2}
\lambda \|M_{\lambda}^{\#}f \|_{L_{\infty}(\rn)} \le \|f\|_{\BMO (\rn)} \le c_n \|M_{\lambda}^{\#}f \|_{L_{\infty}(\rn)},\quad 0 < \lambda \le 1/2.
\end{equation}
The first estimate in \eqref{eq.BMO_2} holds by Chebyshev's inequality, while the second one is a deep result due to John \cite{john_1965} and Str\"omberg \cite{strom_1979}.

A close relation between $M_{\lambda}^{\#}f$ and $f^{\#}$ is provided by the following statement.
\begin{lem}\cite[Lemma 3.4]{jawtor_1985}\label{jawtor_1985}
	There exists $0 < \lambda_n < 1$ and $c,\,C > 0$ such that the inequality
	$$
	c f^{\#} (x) \le M M_{\lambda}^{\#} f(x) \le C f^{\#}(x)
	$$
	holds for all $f \in L_1^{\loc}(\rn)$ and $x \in \rn$ provided that  $0 < \lambda \le \lambda_n$.
\end{lem}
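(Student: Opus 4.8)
The plan is to prove the two inequalities separately. The elementary ingredient underlying both is Chebyshev's inequality in the form $((f-c)\chi_Q)^*(\lambda|Q|)\le \frac{1}{\lambda|Q|}\int_Q|f-c|$, which, after taking the infimum over $c\in\mathbb C$ and the supremum over cubes $Q\ni x$, yields the pointwise bound $M_\lambda^{\#}f(x)\le \lambda^{-1}f^{\#}(x)$. This already settles the left half of a crude estimate, but it is useless for the upper bound $MM_\lambda^{\#}f\le Cf^{\#}$: applying $M$ to it only produces $\lambda^{-1}Mf^{\#}$. The whole content of the lemma is that the extra maximal operator on the left is absorbed back into $f^{\#}$, so both directions require a genuinely local argument.

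For the upper estimate $MM_\lambda^{\#}f(x)\le Cf^{\#}(x)$ I would fix a cube $R\ni x$ and split, for each $y\in R$, the defining supremum of $M_\lambda^{\#}f(y)$ according to whether the competing cube $Q\ni y$ has $\ell(Q)\ge\ell(R)$ or $\ell(Q)<\ell(R)$. For the large cubes, choosing $c=f_{Q'}$ for a fixed concentric dilate $Q'\supset Q$ with $Q'\ni x$ and $|Q'|\ap|Q|$, Chebyshev gives a clean pointwise bound by $C\lambda^{-1}f^{\#}(x)$, uniformly in $y\in R$. For the small cubes one has $Q\subset 3R$, so the oscillation only sees $g:=(f-f_{3R})\chi_{3R}$, and the small-cube part is dominated by $M_\lambda^{\#}g(y)$. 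The crucial point is then the distributional estimate
$$
|\{y\in\rn:\ M_\lambda^{\#}g(y)>t\}|\ \le\ \frac{C}{\lambda}\,|\{y\in\rn:\ |g(y)|>t\}|,\qquad t>0,
$$
which holds because $\inf_c((g-c)\chi_Q)^*(\lambda|Q|)>t$ forces (taking $c=0$) $|\{|g|>t\}\cap Q|>\lambda|Q|$, so the level set is contained in $\{M\chi_{\{|g|>t\}}>\lambda\}$ and the weak-$(1,1)$ bound for $M$ applies. Integrating in $t$ gives $\|M_\lambda^{\#}g\|_{L_1(\rn)}\le C\lambda^{-1}\|g\|_{L_1(\rn)}$; since $\frac{1}{|3R|}\int_{3R}|f-f_{3R}|\le f^{\#}(x)$, averaging over $R$ closes this half.

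For the lower estimate $f^{\#}(x)\le CMM_\lambda^{\#}f(x)$ I would fix a cube $Q_0\ni x$ and replace the mean oscillation by a median, using $\inf_c\frac{1}{|Q_0|}\int_{Q_0}|f-c|\le \frac{1}{|Q_0|}\int_{Q_0}|f-m_f(Q_0)|$. The main tool is a local median-oscillation decomposition (a Calderón–Zygmund stopping-time selection on the dyadic subcubes of $Q_0$, valid for $\lambda\le\lambda_n$): it produces a sparse family $\{Q_j\}$ of subcubes of $Q_0$ with $\frac{1}{|Q_0|}\int_{Q_0}|f-m_f(Q_0)|\le C\,|Q_0|^{-1}\sum_j\omega_\lambda(f;Q_j)|Q_j|$, where $\omega_\lambda(f;Q):=\inf_c((f-c)\chi_Q)^*(\lambda|Q|)$. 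Since $\omega_\lambda(f;Q_j)\le\inf_{Q_j}M_\lambda^{\#}f$ and the $Q_j$ carry pairwise-disjoint major subsets $E_j\subset Q_j$ with $|E_j|\ge\tfrac12|Q_j|$, the right-hand side is at most $C\,|Q_0|^{-1}\int_{Q_0}M_\lambda^{\#}f\le C\,MM_\lambda^{\#}f(x)$. Taking the supremum over $Q_0\ni x$ finishes the argument.

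The main obstacle is the lower estimate, and specifically the local median-oscillation decomposition: one must run the stopping-time selection at a level comparable to $\omega_\lambda(f;Q_0)$ and verify both the sparseness of the selected cubes and the pointwise reconstruction of $f-m_f(Q_0)$ from the local oscillations. This is precisely the step that forces the restriction $0<\lambda\le\lambda_n$, since the smallness of $\lambda$ is what guarantees that the stopping cubes occupy only a controlled fraction of their parents, making the family sparse and the sum convergent. The upper estimate, by contrast, needs no smallness of $\lambda$ and only the weak-$(1,1)$ bound for $M$.
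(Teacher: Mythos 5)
The paper does not prove this lemma at all: it is imported verbatim as \cite[Lemma 3.4]{jawtor_1985}, so there is no internal argument to compare yours against and I can only judge your sketch on its own terms. Your upper estimate is essentially a complete and correct proof: the large-cube/small-cube splitting relative to a fixed cube $R\ni x$, the choice $c=f_{Q'}$ for a bounded dilate $Q'\ni x$ in the large-cube regime, and the distributional inequality $|\{M_{\lambda}^{\#}g>t\}|\le C\lambda^{-1}|\{|g|>t\}|$ (via the inclusion into $\{M\chi_{\{|g|>t\}}>\lambda\}$ and the weak $(1,1)$ bound for $M$) combine to give $\frac{1}{|R|}\int_R M_{\lambda}^{\#}f\le C\lambda^{-1}f^{\#}(x)$ for every cube $R\ni x$, which is what is needed; as you note, no smallness of $\lambda$ enters and the constant degenerates only like $\lambda^{-1}$. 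The lower estimate is where the substance lies, and there your proposal does not prove the statement so much as reduce it to the local mean (median) oscillation decomposition; that decomposition \emph{is} the content of the Jawerth--Torchinsky lemma in its modern (Lerner) packaging, so as written this half is a correct reduction to a known but nontrivial result rather than a self-contained argument. The reduction itself is sound: $\omega_{\lambda}(f;Q_j)\le\inf_{Q_j}M_{\lambda}^{\#}f$ together with the pairwise disjoint major subsets $E_j$ converts the sparse sum into $\frac{C}{|Q_0|}\int_{Q_0}M_{\lambda}^{\#}f\le C\,MM_{\lambda}^{\#}f(x)$, and you correctly locate the role of $\lambda\le\lambda_n$ in the stopping-time selection (the chosen cubes must occupy a fixed fraction strictly less than one of their parent, which is exactly what small $\lambda$ buys). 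To make this half self-contained you would need to actually carry out that Calder\'on--Zygmund iteration, or alternatively follow the original route of \cite{jawtor_1985}, which establishes the equivalent rearrangement inequality $((f-m_f(Q))\chi_Q)^*(t)\le c\,\big(M_{\lambda}^{\#}f\,\chi_Q\big)^*(t)+((f-m_f(Q))\chi_Q)^*(2t)$ and iterates it; the two mechanisms are the same idea in different form.
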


The following statement follows by \cite[Theorem 8]{NakamuraSaw_2017}.
\begin{lem}\label{NakSaw_2017}
	Let $0 < p < \infty$ and $v \in A_{\infty}$. There exist $0 < \lambda_v < 1$ and  $c_{n,p,v} > 0$ such that the inequality  
	$$
	\|f\|_{L_p(Q,v)} \le c_{n,p,v} \, \bigg( v(Q)^{1/p}|f|_Q + \big\|M_{\lambda_v}^{\#} f \big\|_{L_{p}(Q,v)} \bigg)
	$$
	holds for any $f \in {\mathfrak M}(\rn)$ and  $Q \subset \rn$.
\end{lem}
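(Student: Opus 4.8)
The plan is to obtain the estimate as a corollary of the John--Str\"omberg type weighted comparison contained in \cite[Theorem 8]{NakamuraSaw_2017}. In the form I intend to use it, that theorem asserts that for $v \in A_{\infty}$ and $0 < p < \infty$ there are a threshold $0 < \lambda_v < 1$ and a constant $c > 0$ such that, for every cube $Q$ and every (say, real-valued) $f$,
$$
\big\| f - m_f(Q) \big\|_{L_p(Q,v)} \le c \, \big\| M_{\lambda_v}^{\#} f \big\|_{L_p(Q,v)},
$$
where $m_f(Q)$ denotes a median of $f$ over $Q$. Granting this, the remaining work is purely to trade the median for the mean $|f|_Q$ and to peel off the constant term, which is elementary.

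First I would invoke the quasi-triangle inequality for $\|\cdot\|_{L_p(Q,v)}$ (its constant depending only on $p$) to split
$$
\|f\|_{L_p(Q,v)} \ls \big\| f - m_f(Q) \big\|_{L_p(Q,v)} + \big\| m_f(Q) \big\|_{L_p(Q,v)} = \big\| f - m_f(Q) \big\|_{L_p(Q,v)} + |m_f(Q)| \, v(Q)^{1/p},
$$
and bound the first summand by $\big\| M_{\lambda_v}^{\#} f \big\|_{L_p(Q,v)}$ via the displayed inequality. For the second summand I would bound the median by the mean of $|f|$: since $|\{x \in Q : f(x) > m_f(Q)\}| \le |Q|/2$ and $|\{x \in Q : f(x) < m_f(Q)\}| \le |Q|/2$, at least half of $Q$ carries $|f| \ge |m_f(Q)|$, so $\int_Q |f| \ge |m_f(Q)|\,|Q|/2$, that is $|m_f(Q)| \le 2 |f|_Q$. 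Combining the two bounds yields $\|f\|_{L_p(Q,v)} \ls v(Q)^{1/p} |f|_Q + \big\| M_{\lambda_v}^{\#} f \big\|_{L_p(Q,v)}$, which is the claim with $c_{n,p,v}$ collecting the constants above.

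The substantive analytic input --- the weighted comparison $\| f - m_f(Q)\|_{L_p(Q,v)} \ls \| M_{\lambda_v}^{\#} f \|_{L_p(Q,v)}$, where the $A_{\infty}$ hypothesis enters through the comparison $v(E)/v(Q) \le C (|E|/|Q|)^{\delta}$ together with a Lerner-type local mean oscillation decomposition --- is exactly what the cited theorem supplies, so in the present argument it is used as a black box; accordingly I do not expect a genuine obstacle here, only the bookkeeping of matching $\lambda_v$ and the constants, of absorbing the $0<p<1$ quasi-norm constant into $c_{n,p,v}$, and of observing that taking $M_{\lambda_v}^{\#} f$ over all cubes rather than only subcubes of $Q$ can only enlarge the right-hand side. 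Finally, the complex-valued case requires nothing new: writing $f = \Re f + i \Im f$ and using that $M_{\lambda_v}^{\#}(\Re f) \le M_{\lambda_v}^{\#} f$ and $M_{\lambda_v}^{\#}(\Im f) \le M_{\lambda_v}^{\#} f$ pointwise (because $|\Re(f-c)| \le |f-c|$ and $|\Im(f-c)| \le |f-c|$), together with $|\Re f|_Q \le |f|_Q$ and $|\Im f|_Q \le |f|_Q$, reduces it to the real-valued estimate just established.
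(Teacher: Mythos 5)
Your argument is correct and follows the same route as the paper, which offers no proof at all beyond the remark that the lemma ``follows by [NakamuraSaw\_2017, Theorem 8]'': you take that theorem as the black box and supply the elementary bookkeeping (quasi-triangle inequality, the median-to-mean bound $|m_f(Q)| \le 2|f|_Q$, and the reduction of the complex case) that the paper leaves implicit. Nothing further is needed.
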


Combining Lemmas \ref{jawtor_1985} and \ref{NakSaw_2017}, we have the following lemma.
\begin{lem}\label{lem876346754}
	Let $0 < p < \infty$ and $v \in A_{\infty}$. There exist $0 < \lambda_v < 1$ and  $c_{n,p,v} > 0$ such that the inequality
	\begin{equation*}
	\|f\|_{L_{p}(Q,v)} \leq c \left (v(Q)^{{1} / {p}}|f|_Q +
	\|f^{\#}\|_{L_{p}(Q,v)}\right)
	\end{equation*}
	holds for any $f \in L_{1}^{\loc}(\rn)$ and each cube $Q \subset \rn$.	
\end{lem}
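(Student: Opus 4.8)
The plan is to read the statement off the two preceding lemmas, so no new machinery is needed. Applying Lemma~\ref{NakSaw_2017} to the given weight $v \in A_{\infty}$ produces constants $0 < \lambda_v < 1$ and $c_{n,p,v} > 0$ such that $\|f\|_{L_p(Q,v)} \le c_{n,p,v}\bigl(v(Q)^{1/p}|f|_Q + \|M_{\lambda_v}^{\#}f\|_{L_p(Q,v)}\bigr)$ for every $f \in {\mathfrak M}(\rn)$ and every cube $Q$. Since the target inequality differs only in that $\|M_{\lambda_v}^{\#}f\|_{L_p(Q,v)}$ is replaced by $\|f^{\#}\|_{L_p(Q,v)}$, the entire argument reduces to the pointwise majorization $M_{\lambda_v}^{\#}f(x) \le C\,f^{\#}(x)$ for a.e.\ $x \in \rn$, valid for all $f \in L_1^{\loc}(\rn)$ with $C$ depending only on $n$.

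To obtain this majorization I would reconcile the two threshold parameters. Let $\lambda_n$ denote the constant furnished by Lemma~\ref{jawtor_1985} and set $\lambda := \min\{\lambda_v,\lambda_n\}$. First I would record that the John--Str\"omberg maximal function is non-increasing in its parameter: because the decreasing rearrangement $((f-c)\chi_Q)^{*}$ is non-increasing, shrinking $\lambda$ can only enlarge $((f-c)\chi_Q)^{*}(\lambda|Q|)$, and taking the infimum over $c$ and the supremum over $Q$ preserves this, so $M_{\lambda_v}^{\#}f(x) \le M_{\lambda}^{\#}f(x)$ because $\lambda \le \lambda_v$. Next, applying the elementary bound $Mg \ge g$ a.e.\ (Lebesgue differentiation) to the nonnegative function $g = M_{\lambda}^{\#}f$ gives $M_{\lambda}^{\#}f(x) \le M M_{\lambda}^{\#}f(x)$. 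Finally, since $\lambda \le \lambda_n$, the right-hand inequality of Lemma~\ref{jawtor_1985} yields $M M_{\lambda}^{\#}f(x) \le C\,f^{\#}(x)$. Chaining the three estimates gives $M_{\lambda_v}^{\#}f(x) \le C\,f^{\#}(x)$ almost everywhere.

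It then remains to take $L_p(Q,v)$ quasi-norms in this pointwise inequality, obtaining $\|M_{\lambda_v}^{\#}f\|_{L_p(Q,v)} \le C\,\|f^{\#}\|_{L_p(Q,v)}$, and to feed this into the bound from Lemma~\ref{NakSaw_2017}; absorbing the constants produces the assertion with $c = c_{n,p,v}\max\{1,C\}$. The one genuinely non-routine point, which I would treat carefully, is that the parameter $\lambda_v$ delivered by Lemma~\ref{NakSaw_2017} need not satisfy $\lambda_v \le \lambda_n$, so Lemma~\ref{jawtor_1985} cannot be invoked at $\lambda_v$ directly; the monotonicity of $\lambda \mapsto M_{\lambda}^{\#}f$ is precisely what bridges the two admissible ranges and lets the two lemmas be composed.
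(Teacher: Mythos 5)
Your proof is correct and follows exactly the route the paper takes, namely feeding the John--Str\"omberg bound of Lemma~\ref{jawtor_1985} into the estimate of Lemma~\ref{NakSaw_2017}; the paper states this combination without detail, whereas you also supply the (correct) monotonicity of $\lambda \mapsto M_{\lambda}^{\#}f$ and the pointwise bound $M_{\lambda}^{\#}f \le M M_{\lambda}^{\#}f$ needed to reconcile the two thresholds $\lambda_v$ and $\lambda_n$.
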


\begin{rem}
Lemma \ref{lem876346754}, in particular, states that there exists $c_{n,p} > 0$ such that the inequality  
$$
\|f\|_{L_{p}(Q)} \leq c_{n,p} \left (|Q|^{{1} / {p}}|f|_Q + \|f^{\#}\|_{L_{p}(Q)}\right)
$$
holds for any $f \in L_{1}^{\loc}(\rn)$, and each cube $Q \subset \rn$.	This note shows that in inequality (7.13) of \cite[Corollary 7.5, p. 380]{BS} the multiplier $|Q|^{1/p}$ in front of $|f|_Q$ had been lost.
\end{rem}

\

\section{Weighted $L_p$-estimates for fractional maximal functions over balls}\label{FrMax}

\

In this section we obtain weighted $L_p$-estimates for fractional maximal functions over balls. 

The non-increasing rearrangement (see, e.g., \cite[p. 39]{BS}) of a function
$f \in {\mathfrak M}_0 (\rn)$ with respect to the measure $v(x)\,dx$ is
defined by
$$
f_v^*(t)=\inf\left\{\la >0 : v(\{x\in\rn: |f(x)|>\la \}) \leq t
\right\}\quad (0<t<\infty).
$$
Let $p\in[1,\infty)$. The \textit{weighted weak Lorentz space}
$L^{p,\infty}(\rn,v)$ is defined as
$$
L^{p,\infty}(\rn,v):= \left\{f \in {\mathfrak M}_0(\rn): \quad
\|f\|_{L^{p,\infty}(\rn,v)}:=\sup_{0<t<\infty}t\sp{1 / {p}}f_v^*(t)<\infty\right\}.
$$

In order to get an estimate from above we need the following lemma.
\begin{lem}\label{lem geydime}
	Let $0\leq \a <n$, $1 \leq q<\infty$, $0<p<q<\infty$,   $v$ be a weight
	function on $\rn$. Then the inequality
	\begin{equation}\label{equrty82tyqwert}
	\|M_{\a}(f\chi_{(2B)})\|_{L_{p}(B,v)} \lesssim
	v(B)^{1/p-1/q}\left(\sup_{B'\subset
		B}\frac{v(B')}{|B'|^{q(1-\a/n)}}\right)^{1/q}\|f\|_{L_1(2B)}  ,
	\end{equation}
	holds for any ball $B$ in $\rn$ and $f\in \Lloc$ with constant which does not depend on $B$ and $f$.
\end{lem}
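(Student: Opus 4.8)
The plan is to reduce the weighted $L_p$-estimate to a distributional (weak-type) estimate for $M_\a(f\chi_{2B})$ and then integrate using the rearrangement with respect to the measure $v\,dx$. First I would recall the elementary pointwise bound: for any $x\in B$ and any ball $B'=B(x,\rho)$ that contributes to the supremum defining $M_\a(f\chi_{2B})(x)$, we have
\begin{equation*}
\frac{1}{|B'|^{1-\a/n}}\int_{B'}|f\chi_{2B}(y)|\,dy \le \frac{\|f\|_{L_1(2B)}}{|B'|^{1-\a/n}},
\end{equation*}
so the only balls that matter are those with $|B'|$ bounded below; after a standard geometric reduction one may assume each such $B'$ is (comparable to) a subball of a fixed dilate of $B$. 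The key quantity to isolate is
\begin{equation*}
A:=\sup_{B'\subset B}\frac{v(B')}{|B'|^{q(1-\a/n)}},
\end{equation*}
which I expect to enter through a weighted weak-type $(1,q)$ bound.

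The heart of the argument is to show that $M_\a(f\chi_{2B})$ maps $L_1(2B)$ into the weighted weak Lorentz space $L^{q,\infty}(B,v)$ with norm controlled by $A^{1/q}\|f\|_{L_1(2B)}$; that is,
\begin{equation*}
\sup_{t>0} t\, v\big(\{x\in B: M_\a(f\chi_{2B})(x)>t\}\big)^{1/q}\lesssim A^{1/q}\,\|f\|_{L_1(2B)}.
\end{equation*}
To prove this I would fix $t>0$, apply a Vitali/Besicovitch covering argument to the level set $\{M_\a(f\chi_{2B})>t\}$ to extract disjoint balls $B_j'\subset$ (a dilate of) $B$ with $t|B_j'|^{1-\a/n}\lesssim\int_{B_j'}|f\chi_{2B}|$, estimate $v$ of the level set by $\sum_j v(B_j')\le A\sum_j|B_j'|^{q(1-\a/n)}$, and then bound this sum. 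Since $q\ge 1$, one has $\sum_j|B_j'|^{q(1-\a/n)}\le\big(\sum_j|B_j'|^{1-\a/n}\big)^{q}$, and the Vitali selection gives $\sum_j|B_j'|^{1-\a/n}\lesssim t^{-1}\sum_j\int_{B_j'}|f|\le t^{-1}\|f\|_{L_1(2B)}$ by disjointness. Combining yields the desired weak-type bound.

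Finally, I would pass from the weak Lorentz estimate to the strong $L_p(B,v)$ bound by the standard embedding $L^{q,\infty}(B,v)\hookrightarrow L^p(B,v)$ valid for $0<p<q$, whose norm on a finite-measure space carries precisely the factor $v(B)^{1/p-1/q}$:
\begin{equation*}
\|g\|_{L_p(B,v)}\lesssim v(B)^{1/p-1/q}\,\|g\|_{L^{q,\infty}(B,v)}.
\end{equation*}
Applying this with $g=M_\a(f\chi_{2B})$ and inserting the weak-type bound from the previous step produces exactly \eqref{equrty82tyqwert}. I expect the main obstacle to be the covering step: one must control the overlap of the selected balls and verify that restricting to balls centered in $B$ (versus the full supremum over balls containing $x$) does not spoil the bound $\sum_j v(B_j')\lesssim A\|f\|_{L_1(2B)}^q t^{-q}$, which is where the hypothesis $q\ge 1$ is genuinely used to convert the sum of $(1-\a/n)$-powers into the $q(1-\a/n)$-power appearing in $A$.
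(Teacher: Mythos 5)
Your proposal is correct and follows essentially the same route as the paper: the embedding $L^{q,\infty}(B,v)\hookrightarrow L_p(B,v)$ with the factor $v(B)^{1/p-1/q}$ obtained from the Hardy--Littlewood rearrangement inequality, combined with a Vitali-covering weak-type $(1,q)$ estimate for $M_\alpha(f\chi_{2B})$ in which the hypothesis $q\ge 1$ is used exactly as you indicate, to pass from a sum of $q$-th powers to the $q$-th power of the sum. The only (immaterial) difference is that you apply this convexity step to the quantities $|B_j'|^{1-\alpha/n}$, whereas the paper applies it to the integrals $\int_{B_k}|f|\chi_{2B}$; the two are interchangeable via the selection inequality for the covering balls.
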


\begin{proof}
	Let $B$ be any ball in $\rn$. Applying the Hardy-Littlewood inequality, we get that
	\begin{align}\label{eq3189657982176}
	\left(\int_{B}\left|f(x)\right|^{p}v(x)dx\right)^{1/p} &\leq \left(\int_0^{v(B)}\left[(\chi_B f)^*_v(t)\right]^{p}dt \right)^{1/p} \notag \\
	&\leq \left[\sup_{0<t<v(B)}t^{1/q}(\chi_B f)^*_v(t)\right]
	\left(\int_0^{v(B)}t^{-p/q}dt\right)^{1/p} \notag \\
	&\thickapprox\|\chi_Bf\|_{L^{q,\infty}(\rn,v)}v(B)^{1/p-1/q}.
	\end{align}
	Therefore
	\begin{equation}\label{eq25076108792346t1}
	\|\chi_B M_{\a}(f\chi_{(2B)})\|_{L_{p}(\rn,v)}\lesssim
	v(B)^{1/p-1/q} \|\chi_BM_{\a}(f\chi_{(2B)})\|_{L^{q,\infty}(\rn,v)}.
	\end{equation}
	Put
	$$
	E_s=\{x\in B: M_{\a}(f\chi_{(2B)})(x)>s\}.
	$$
	For every point $x\in E_s$ we can find a ball $B_x=B(x,r(x))\subset
	3B$ such that
	$$
	|B_x|^{\a/n-1}\int_{B_x}|f(y)|\chi_{2B}(y)dy>s.
	$$
	The family of balls $\{B_x\}_{x\in E_s}$ covers the bounded set
	$E_s$. By Vitali covering lemma, this family contains a
	sequence of non-intersecting balls $\{B_k\}_k$.	Thus
	\begin{align*}
	v(E_s)\leq \sum\limits_{k=1}^{\infty} v(3B_k)&\leq
	\sum\limits_{k=1}^{\infty} v(3B_k) s^{-q}
	\left(|B_k|^{\a/n-1}\int_{B_k}|f(y)|\chi_{2B}(y)dy\right)^{q}\\
	& \leq C s^{-q}\sum\limits_{k=1}^{\infty}
	\frac{v(3B_k)}{|3B_k|^{q(1-\a/n)}}
	\left(\int_{B_k}|f(y)|\chi_{2B}(y)dy\right)^{q}\\
	&\leq C s^{-q}\sup\limits_{B'\subset
		B}\frac{v(B')}{|B'|^{q(1-\a/n)}}\sum\limits_{k=1}^{\infty}
	\left(\int_{B_k}|f(y)|\chi_{2B}(y)dy\right)^{q}\\
	&\leq C s^{-q}\sup\limits_{B'\subset
		B}\frac{v(B')}{|B'|^{q(1-\a/n)}}\left(\sum\limits_{k=1}^{\infty}
	\int_{B_k}|f(y)|\chi_{2B}(y)dy\right)^{q}\\
	&\leq C s^{-q}\sup\limits_{B'\subset
		B}\frac{v(B')}{|B'|^{q(1-\a/n)}}\|f\|_{L_1(2B)}^q.
	\end{align*}
	Therefore
	\begin{align}\label{eq12539867289754}
	\|\chi_BM_{\a}(f\chi_{(2B)})\|_{L^{q,\infty}(\rn,v)} = \sup_{0<s<\infty}s \, v(E_s)^{1/q} \leq C \left(\sup\limits_{B'\subset
		B}\frac{v(B')}{|B'|^{q(1-\a/n)}}\right)^{1/q}\|f\|_{L_1(2B)}.
	\end{align}
	In view of \eqref{eq3189657982176}, \eqref{eq25076108792346t1} and
	\eqref{eq12539867289754}, we arrive at \eqref{equrty82tyqwert}.
\end{proof}

If we restrict weight function to be from  reverse doubling class of some order, we get the following statement.
\begin{cor}
	Let  $0\leq \a <n$, $1 \leq q<\infty$, $0<p<q<\infty$,  $v\in
	RD_{q(1-\a/n)}$. Then the inequality 
	\begin{equation*}
	\|M_{\a}(f\chi_{(2B)})\|_{L_{p}(B,v)} \lesssim
	v(B)^{1/p}|B|^{\a/n-1}\|f\|_{L_1(2B)},
	\end{equation*}
	holds for any ball $B$ in $\rn$ and $f\in \Lloc$ with constant independent of $B$ and $f$.
\end{cor}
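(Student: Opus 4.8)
The plan is to derive this corollary as an immediate consequence of Lemma \ref{lem geydime}, specializing the supremum factor $\left(\sup_{B'\subset B} v(B')/|B'|^{q(1-\a/n)}\right)^{1/q}$ using the reverse doubling hypothesis. Recall that $v \in RD_{q(1-\a/n)}$ means there is a constant $c$ such that $v(B')/v(B) \le c\,(|B'|/|B|)^{q(1-\a/n)}$ for all balls $B' \subset B$. The entire task therefore reduces to showing that, under this hypothesis, the inner supremum is controlled by the single quantity $v(B)/|B|^{q(1-\a/n)}$ coming from $B$ itself.

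First I would take an arbitrary ball $B' \subset B$ and rewrite the reverse doubling inequality in the form
\begin{equation*}
\frac{v(B')}{|B'|^{q(1-\a/n)}} \le c\, \frac{v(B)}{|B|^{q(1-\a/n)}},
\end{equation*}
which follows by simply cross-multiplying the defining inequality of $RD_{q(1-\a/n)}$ and regrouping the powers of $|B'|$ and $|B|$. Since this holds for every $B' \subset B$ with the same constant $c$, I would take the supremum over all such $B'$ on the left to obtain
\begin{equation*}
\sup_{B'\subset B} \frac{v(B')}{|B'|^{q(1-\a/n)}} \le c\, \frac{v(B)}{|B|^{q(1-\a/n)}}.
\end{equation*}
Raising both sides to the power $1/q$ then gives the bound $\left(\sup_{B'\subset B} v(B')/|B'|^{q(1-\a/n)}\right)^{1/q} \lesssim v(B)^{1/q}|B|^{\a/n-1}$.

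Finally I would substitute this estimate into the conclusion \eqref{equrty82tyqwert} of Lemma \ref{lem geydime}. The prefactor becomes $v(B)^{1/p-1/q}\cdot v(B)^{1/q}|B|^{\a/n-1} = v(B)^{1/p}|B|^{\a/n-1}$, where the two powers of $v(B)$ combine cleanly, and this is exactly the claimed bound $v(B)^{1/p}|B|^{\a/n-1}\|f\|_{L_1(2B)}$. The constant produced is the product of the constant from Lemma \ref{lem geydime} and the reverse doubling constant $c^{1/q}$, both independent of $B$ and $f$, so the uniformity assertion is preserved. There is no genuine obstacle here: the only point requiring a moment of care is the bookkeeping of the exponents on $v(B)$, making sure that $1/p - 1/q$ together with the $1/q$ extracted from the reverse doubling estimate recombine to give precisely $1/p$.
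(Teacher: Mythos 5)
Your proposal is correct and follows exactly the paper's own argument: the reverse doubling condition is used to bound $\sup_{B'\subset B} v(B')/|B'|^{q(1-\a/n)}$ by $c\,v(B)/|B|^{q(1-\a/n)}$, and this is substituted into the conclusion of Lemma \ref{lem geydime}, with the exponents $1/p-1/q$ and $1/q$ recombining to give $v(B)^{1/p}|B|^{\a/n-1}$. Nothing to add.
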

\begin{proof}
	Since $v\in RD_{q(1-\a/n)}$, we have
	$$
	\left(\sup_{B'\subset B}\frac{v(B')}{|B'|^{q(1-\a/n)}}\right)^{1/q}
	\leq \frac{v(B)^{1/q}}{|B|^{1 - \a / n}}.
	$$
	Thus, by Lemma \ref{lem geydime}, we get
	\begin{equation*}
	\|M_{\a}(f\chi_{(2B)})\|_{L_{p}(B,v)} \lesssim
	v(B)^{1/p-1/q}v(B)^{1/q}|B|^{\a/n-1}\|f\|_{L_1(2B)}\thickapprox
	v(B)^{1/p}|B|^{\a/n-1}\|f\|_{L_1(2B)}.
	\end{equation*}
\end{proof}

The following lemma is true.
\begin{lem}\label{lem.RD}
	Let $1 \leq q<\infty$, $0<p<q<\infty$, $0\leq \a <n$, $v\in RD_{q(1-\a/n)}$
	and $f\in \Lloc$. Then for any ball $B=B(x,r)\subset\rn$
	\begin{equation}\label{25089672897}
	\|M_{\a}f\|_{L_{p}(B,v)} \lesssim
	v(B)^{{1} / {p}}\left(\sup_{t>r}|B(x,t)|^{\a/n-1}\int_{B(x,t)}|f(y)|dy\right),
	\end{equation}
	where constants in equivalency do not depend on $B$ and $f$.
\end{lem}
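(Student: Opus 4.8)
The plan is to split $f$ according to its behaviour near the ball $B=\Bxr$. Write $f = f\chi_{2B} + f\chi_{\rn\setminus 2B}$, where $2B = B(x,2r)$, and use the sublinearity of $M_\a$ to obtain the pointwise bound $M_\a f \le M_\a(f\chi_{2B}) + M_\a(f\chi_{\rn\setminus 2B})$. It then suffices to estimate the two corresponding $L_p(B,v)$-norms separately by the right-hand side of \eqref{25089672897}.

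For the local part I would invoke the Corollary just proved (a consequence of Lemma \ref{lem geydime} for $v\in RD_{q(1-\a/n)}$), which gives $\|M_\a(f\chi_{2B})\|_{L_p(B,v)} \ls v(B)^{1/p}|B|^{\a/n-1}\|f\|_{L_1(2B)}$. Since $|B(x,2r)| = 2^n|B|$ and $\a/n-1<0$, I can rewrite $|B|^{\a/n-1}\int_{2B}|f|$ as $2^{n(1-\a/n)}|B(x,2r)|^{\a/n-1}\int_{B(x,2r)}|f|$, which is dominated by the supremum on the right-hand side of \eqref{25089672897} upon choosing $t=2r$. This disposes of the local term and is where the reverse-doubling hypothesis is used.

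The main work is the far part, and the key is a uniform pointwise bound on $B$. Fix $y\in B$, so $|y-x|<r$. If a ball $B(y,\rho)$ meets $\rn\setminus 2B$, then there is a point $z$ with $|z-y|<\rho$ and $|z-x|\ge 2r$, whence $2r \le |z-x| \le |z-y|+|y-x| < \rho + r$, forcing $\rho>r$. For such $\rho$ one has $B(y,\rho)\subset B(x,|y-x|+\rho) \subset B(x,\rho+r) \subset B(x,2\rho)$, because $r<\rho$. Comparing volumes through $|B(x,2\rho)| = 2^n|B(y,\rho)|$ and using $\a/n-1<0$, I obtain
\[
|B(y,\rho)|^{\a/n-1}\int_{B(y,\rho)}|f|\,\chi_{\rn\setminus 2B} \le 2^{n(1-\a/n)}|B(x,2\rho)|^{\a/n-1}\int_{B(x,2\rho)}|f|.
\]
Writing $t=2\rho>r$ and taking the supremum over admissible $\rho$ shows that $M_\a(f\chi_{\rn\setminus 2B})(y) \ls \sup_{t>r}|B(x,t)|^{\a/n-1}\int_{B(x,t)}|f|$ for every $y\in B$, with constant independent of $y$. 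As the right-hand side is constant on $B$, integrating in $L_p(B,v)$ produces a factor $v(B)^{1/p}$, so the far part is again bounded by the right-hand side of \eqref{25089672897}.

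Adding the two estimates completes the proof. The only delicate point is the far-part geometry: one must check both that $\rho>r$ is forced (so only large radii contribute) and that $B(y,\rho)$ is contained in a ball $B(x,2\rho)$ centered at $x$ of comparable volume, so that the negative exponent $\a/n-1$ turns the volume comparison into the desired inequality; everything else is routine.
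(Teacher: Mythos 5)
Your proof is correct and follows essentially the same route as the paper: the splitting $f=f\chi_{2B}+f\chi_{\rn\setminus 2B}$, the Corollary to Lemma \ref{lem geydime} (where $v\in RD_{q(1-\a/n)}$ enters) for the local piece, and the same geometric observations (a ball $B(y,\rho)$ with $y\in B$ meeting $\rn\setminus 2B$ forces $\rho>r$ and is contained in $B(x,2\rho)$) for the far piece. No discrepancies worth noting.
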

\begin{proof}
	It is obvious that for any ball $B=B(x,r)$
	\begin{equation}\label{eq678753456}
	\|M_{\a} f\|_{L_{p}(B,v)} \leq \|M_{\a}
	(f\chi_{(2B)})\|_{L_{p}(B,v)}+ \|M_{\a} (f\chi_{\rn\backslash
		(2B)})\|_{L_{p}(B,v)}.
	\end{equation}
	By Lemma \ref{lem geydime}, we get
	\begin{align}\label{eq9085620986}
	\|M_{\a}(f\chi_{(2B)})\|_{L_{p}(B,v)} & \lesssim
	v(B)^{1/p}|B|^{\a/n-1}\|f\|_{L_1(2B)} \notag \\
	&\thickapprox v(B)^{1/p}\|f\|_{L_1(2B)} \sup_{t>2r}
	|B(x,t)|^{\a/n-1} \notag \\
	&\lesssim v(B)^{{1}/{p}} \sup_{t>2r}
	|B(x,t)|^{\a/n-1}\|f\|_{L_1(B(x,t))},
	\end{align}
	Let $y$ be an arbitrary point from $B.$ If
	$B(y,t)\cap\{\rn\backslash (2B)\}\neq\emptyset,$ then $t>r.$ Indeed,
	if $z\in B(y,t)\cap\{\rn\backslash (2B)\},$ then $t> |y-z|\geq
	|x-z|-|x-y|>2r-r=r.$
	
	On the other hand, $B(y,t)\cap\{\rn\backslash (2B)\}\subset
	B(x,2t).$ Indeed, $z\in B(y,t)\cap\{\rn\backslash (2B)\},$ then we
	get $|x-z|\leq |y-z|+|x-y|< t+r< 2t.$
	
	Hence, for all $y \in B$ we have
	\begin{equation*}
	\begin{split}
	M_{\a} (f\chi_{\rn\backslash (2B)})(y)&=
	\sup_{t>0}\frac{1}{|B(y,t)|^{1-\a/n}}
	\int_{B(y,t)\cap\{\rn\backslash (2B)\}}f(y)dy \\
	& \lesssim \, \sup_{t>
		r}\frac{1}{|B(x,2t)|^{1-\a/n}}\int_{B(x,2t)}f(y)dy\\
	&\thickapprox \, \sup_{t> 2r}
	\frac{1}{|B(x,t)|^{1-\a/n}}\int_{B(x,t)}f(y)dy.
	\end{split}
	\end{equation*}
	Thus
	\begin{equation}\label{eq34698286y}
	\|M_{\a}(f\chi_{\rn\backslash (2B)})\|_{L_{p}(B,v)} \lesssim
	v(B)^{{1} / {p}}\left(\sup_{t>r}|B(x,t)|^{\a/n-1}\int_{B(x,t)}f(y)dy\right).
	\end{equation}
	Combining \eqref{eq9085620986} and \eqref{eq34698286y}, in view of
	\eqref{eq678753456}, we arrive at \eqref{25089672897}.
\end{proof}

The following weighted estimate for fractional maximal functions over balls is true.
\begin{lem}\label{lem4.355749430}
	Let $0<p\leq \infty$, $0\leq \a <n$, $v$ be a weight function on $\rn$
	and $f\in \Llocp$. Then for any ball $B=B(x,r)$ in $\rn$
	\begin{equation}\label{eq1000001}
	\begin{split}
	\|M_{\a}f\|_{L_{p}(B,v)} \gtrsim
	v(B)^{{1} / {p}}\left(\sup_{t>r}|B(x,t)|^{\a/n-1}\int_{B(x,t)}
	f(y)dy\right),
	\end{split}
	\end{equation}
	where constant does not depend on $B$ and $f$.
\end{lem}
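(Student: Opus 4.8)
The goal is to establish the lower bound
\[
\|M_{\a}f\|_{L_{p}(B,v)} \gtrsim
v(B)^{{1} / {p}}\left(\sup_{t>r}|B(x,t)|^{\a/n-1}\int_{B(x,t)} f(y)dy\right)
\]
for any ball $B=B(x,r)$ and $f \in \Llocp$. The plan is to exploit the defining supremum in the fractional maximal function and the fact that every point of $B$ is a reasonable ``center'' for balls that already contain the larger concentric balls $B(x,t)$ with $t>r$.

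First I would fix an arbitrary $t > r$ and estimate $M_{\a}f(y)$ from below for each $y \in B = B(x,r)$. The key geometric observation is that for $y \in B(x,r)$ we have $B(x,t) \subset B(y, t+r) \subset B(y, 2t)$, since $|x-y| < r < t$. Because $f \ge 0$, enlarging the domain of integration only increases the integral, so from the definition of $M_{\a}$ applied with the radius $2t$ at center $y$,
\[
M_{\a}f(y) \ge \frac{1}{|B(y,2t)|^{1-\a/n}}\int_{B(y,2t)} f(z)\,dz
\ge \frac{1}{|B(x,2t)|^{1-\a/n}}\int_{B(x,t)} f(z)\,dz.
\]
Since $|B(x,2t)| = 2^n |B(x,t)|$, the constant $2^{-n(1-\a/n)}$ comes out, giving
\[
M_{\a}f(y) \gtrsim |B(x,t)|^{\a/n-1}\int_{B(x,t)} f(z)\,dz
\]
with a constant depending only on $n$ and $\a$, uniformly for all $y \in B$.

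Next I would integrate this pointwise lower bound against the weight $v$ over $B$. Since the right-hand side is a constant independent of $y$, taking the $L_p(B,v)$ quasinorm (for $0 < p < \infty$, and the $L_\infty$ case being analogous with $\esup$) yields
\[
\|M_{\a}f\|_{L_{p}(B,v)} \ge \Big\| M_{\a}f \Big\|_{L_p(B,v)}
\gtrsim |B(x,t)|^{\a/n-1}\Big(\int_{B(x,t)} f(z)\,dz\Big)\, v(B)^{1/p},
\]
because $\|c\|_{L_p(B,v)} = c\,v(B)^{1/p}$ for a nonnegative constant $c$. Finally, since this holds for every $t > r$ with a constant independent of $t$, I would take the supremum over $t > r$ on the right-hand side, which produces exactly \eqref{eq1000001}.

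The argument is essentially a one-sided comparison and involves no serious obstacle; the only point requiring care is the uniform geometric inclusion $B(x,t) \subset B(y,2t)$ valid for all $y \in B(x,r)$ and $t > r$, which is what lets a single constant work simultaneously for every $y$. I would also handle the $p = \infty$ endpoint separately, replacing the $L_p$ quasinorm computation with the observation that $\esup_{y\in B} M_{\a}f(y)\,v$-type reasoning gives $v(B)^{1/\infty}$ interpreted appropriately; since the lower pointwise bound is uniform in $y$, the same conclusion follows. No use of the $A_\infty$ or reverse-doubling hypotheses is needed here, which is consistent with the lemma assuming only that $v$ is an arbitrary weight.
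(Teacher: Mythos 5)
Your proof is correct and follows essentially the same route as the paper's: the paper uses the inclusion $B(x,t/2)\subset B(y,t)$ for $t>2r$ where you use $B(x,t)\subset B(y,2t)$ for $t>r$, which is the same estimate after reparametrizing the radius. Both arguments then reduce to the uniform pointwise bound $M_{\alpha}f(y)\ge 2^{\alpha-n}\,|B(x,t)|^{\alpha/n-1}\int_{B(x,t)}f(z)\,dz$ for all $y\in B$, followed by taking the $L_p(B,v)$ quasinorm of a constant.
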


\begin{proof}
	If $y \in B(x,r)$ and $t>2r$, then $B(x,{t} / {2})\subset B(y,t)$
	and
	\begin{equation*}\begin{split}
	M_{\a}f(y) &\ge 2^{\alpha-n} \sup_{t>2r}
	\frac{1}{|B(x,{t} / {2})|^{1-\a/n}}\int_{B(x,{t} / {2})}
	f(z) dz \\
	&= 2^{\alpha-n} \sup_{t>r}|B(x,t)|^{\a/n-1}\int_{B(x,t)} f(y)dy.
	\end{split}
	\end{equation*}
	Thus \eqref{eq1000001} holds with constant independent of $B$ and
	$f$.
\end{proof}

The following statement follows from Lemmas \ref{lem.RD} and  \ref{lem4.355749430}.
\begin{thm}\label{thm4.7}
	Let $1 \leq q<\infty$, $0<p<q<\infty$, $0\leq \a <n$, $v\in RD_{q(1-\a/n)}$
	and $f\in \Llocp$. Then for any ball $B=B(x,r)\subset\rn$
	\begin{equation*}
	\|M_{\a}f\|_{L_{p}(B,v)} \thickapprox
	v(B)^{{1} / {p}}\left(\sup_{t>r}|B(x,t)|^{\a/n-1}\int_{B(x,t)}f(y)dy\right),
	\end{equation*}
	where constants in equivalency do not depend on $B$ and $f$.
\end{thm}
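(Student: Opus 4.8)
The plan is to derive the claimed equivalence directly from the two lemmas that precede it, since Theorem~\ref{thm4.7} is exactly the conjunction of the upper bound in Lemma~\ref{lem.RD} and the lower bound in Lemma~\ref{lem4.355749430}. First I would check that the hypotheses of the theorem match those of both lemmas. The standing assumptions $1 \leq q < \infty$, $0 < p < q < \infty$, $0 \leq \a < n$, and $f \in \Llocp$ are present in both statements; the only extra hypothesis I need beyond the (weaker) weight assumption of Lemma~\ref{lem4.355749430} is $v \in RD_{q(1-\a/n)}$, which is precisely what Lemma~\ref{lem.RD} requires. Note that Lemma~\ref{lem4.355749430} holds for an arbitrary weight $v$, so it applies a fortiori under the reverse-doubling hypothesis, and it is stated for $f \in \Llocp$ (nonnegative), which is also our setting. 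Thus both one-sided estimates are available simultaneously.

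Next I would combine them. For any ball $B = B(x,r) \subset \rn$, Lemma~\ref{lem.RD} gives
\begin{equation*}
\|M_{\a}f\|_{L_{p}(B,v)} \lesssim v(B)^{1/p}\left(\sup_{t>r}|B(x,t)|^{\a/n-1}\int_{B(x,t)}|f(y)|dy\right),
\end{equation*}
while Lemma~\ref{lem4.355749430} gives the reverse inequality
\begin{equation*}
\|M_{\a}f\|_{L_{p}(B,v)} \gtrsim v(B)^{1/p}\left(\sup_{t>r}|B(x,t)|^{\a/n-1}\int_{B(x,t)}f(y)dy\right),
\end{equation*}
both with constants independent of $B$ and $f$. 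Since $f \in \Llocp$ is nonnegative, $|f| = f$ and the right-hand sides coincide, so chaining the two bounds yields $\|M_{\a}f\|_{L_{p}(B,v)} \thickapprox v(B)^{1/p}\left(\sup_{t>r}|B(x,t)|^{\a/n-1}\int_{B(x,t)}f(y)dy\right)$, which is the assertion.

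There is essentially no obstacle here, as the result is a bookkeeping combination of two already-proved one-sided estimates; the only thing I would be careful about is confirming that the constants in both directions are genuinely uniform in $B$ and $f$ (which the lemmas assert) so that the equivalence constant in Theorem~\ref{thm4.7} is likewise uniform. The single substantive point worth flagging in passing is that the upper estimate, and hence the whole equivalence, is where the reverse-doubling hypothesis $v \in RD_{q(1-\a/n)}$ is actually used, since the lower estimate needs no structural assumption on $v$ at all.
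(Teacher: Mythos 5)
Your proposal is correct and coincides with the paper's own argument: the paper introduces Theorem~\ref{thm4.7} with the remark that it ``follows from Lemmas \ref{lem.RD} and \ref{lem4.355749430},'' which is precisely the combination of the upper and lower one-sided estimates you describe. Your observations about the matching hypotheses and the uniformity of the constants are accurate and require no further work.
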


%%%%%%%%%%%%%%%%%%%%%%%%%%%%%%%%%%%%%%%%%%%%%%%%%%%%%%%%%%%%%%%%%%%%%%%%%%%%%%%%%%%%%%%%%%%%%%%%%%%%

%%%%%%%%%%%%%%%%%%%%%%%%%%%%%%%%%%%%%%%%%%%%%%%%%%%%%%%%%%%%%%%%%%%%%%%%%%%%%%%%%%%%%%%%%%%%%%%%%%%%

\

\section{Relation between $\|I_{\a}f\|_{L_{p}(Q,v)}$ and $\|M_{\a}f\|_{L_{p}(Q,v)}$}\label{Sect3}

\

In this section, we study the relation between weighted Lebesgue norms of Riesz potential and fractional maximal function over
cubes. For general fractional type operators weighted local estimates were investigated in \cite{torch_2014}.

The following statement in unweighted case was proved in \cite[Lemma 2.2]{gm}.
\begin{lem}\label{lem32745627}
	Let $1<p<\infty$, $v\in A_{\infty}$ and $0<\a<n$. For $f\geq 0$ such that
	$I_{\a}f$ is locally integrable the inequality
	\begin{equation*}
	\|I_{\a}f\|_{L_{p}(Q,v)} \leq C \, \left (v(Q)^{{1} / {p}}(I_{\a}f)_Q + \|M_{\a}f\|_{L_{p}(Q,v)}\right)
	\end{equation*}
	holds with constant $C > 0$ independent of $f$ and cube $Q$.
\end{lem}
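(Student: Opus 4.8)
The plan is to deduce this estimate from Lemma \ref{lem876346754} applied to the function $g = I_{\a}f$, by showing that its sharp maximal function is pointwise dominated by the fractional maximal function $M_{\a}f$. That is, the core of the proof is the pointwise inequality
\begin{equation*}
(I_{\a}f)^{\#}(x) \lesssim M_{\a}f(x) \qquad (x \in \rn),
\end{equation*}
valid for all $f \geq 0$ for which $I_{\a}f$ is locally integrable. Once this is in hand, I would apply Lemma \ref{lem876346754} with $p$ and $v \in A_{\infty}$ to the function $I_{\a}f$, obtaining
\begin{equation*}
\|I_{\a}f\|_{L_{p}(Q,v)} \lesssim v(Q)^{1/p}(I_{\a}f)_Q + \|(I_{\a}f)^{\#}\|_{L_{p}(Q,v)} \lesssim v(Q)^{1/p}(I_{\a}f)_Q + \|M_{\a}f\|_{L_{p}(Q,v)},
\end{equation*}
which is exactly the claimed bound. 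Note that the hypothesis $v \in A_{\infty}$ enters only through Lemma \ref{lem876346754}, and that $1 < p < \infty$ and $0 < \a < n$ are the standing assumptions needed for the sharp-function estimate.

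To establish the pointwise estimate, I would fix a cube $Q \ni x$ with center $x_Q$ and side length $\ell$, and split $f = f_1 + f_2$ where $f_1 = f\chi_{2Q}$ (or a slightly enlarged concentric cube) and $f_2 = f\chi_{\rn \setminus 2Q}$. For the near part, I would choose the constant $c = 0$ in the definition of $f^{\#}$ and bound the average of $I_{\a}f_1$ over $Q$. Writing $\frac{1}{|Q|}\int_Q I_{\a}f_1(y)\,dy = \frac{1}{|Q|}\int_Q \int_{2Q}\frac{f(z)}{|y-z|^{n-\a}}\,dz\,dy$, I would integrate in $y$ first: for fixed $z$, the integral $\int_Q |y-z|^{\a-n}\,dy \lesssim \ell^{\a}$ uniformly in $z \in 2Q$ (since $0 < \a < n$ keeps the singularity integrable), so the near part is controlled by $\ell^{\a}|Q|^{-1}\int_{2Q} f(z)\,dz \thickapprox |2Q|^{\a/n - 1}\int_{2Q}f \lesssim M_{\a}f(x)$. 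For the far part, I would choose $c = I_{\a}f_2(x_Q)$ and estimate the oscillation $\frac{1}{|Q|}\int_Q |I_{\a}f_2(y) - I_{\a}f_2(x_Q)|\,dy$ using the standard kernel difference
\begin{equation*}
\left| |y-z|^{\a-n} - |x_Q - z|^{\a-n}\right| \lesssim \frac{\ell}{|x_Q - z|^{n-\a+1}} \qquad (z \notin 2Q,\ y \in Q),
\end{equation*}
which follows from the mean value theorem since $|y - x_Q| \lesssim \ell \leq \tfrac{1}{2}|x_Q - z|$ on this range.

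The remaining task for the far part is to sum the dyadic contributions: decomposing $\rn \setminus 2Q$ into annuli $2^{k+1}Q \setminus 2^k Q$ for $k \geq 1$, the kernel-difference bound gives a contribution $\lesssim \sum_{k \geq 1} 2^{-k}\ell (2^k\ell)^{\a - n - 1} \cdot (2^k\ell)^n \cdot (2^k\ell)^{-n}\int_{2^{k+1}Q} f$ per shell, and after collecting the powers of $2^k$ the series converges (the exponent is strictly negative because $\a < n$), each term being dominated by $(2^{k+1}\ell)^{\a - n}\int_{2^{k+1}Q}f \thickapprox |2^{k+1}Q|^{\a/n - 1}\int_{2^{k+1}Q}f \lesssim M_{\a}f(x)$. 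I expect the bookkeeping of this geometric sum — tracking the exact powers of $2^k\ell$ and confirming summability — to be the main technical obstacle, though it is routine once the kernel estimate is set up. Taking the supremum over all cubes $Q \ni x$ then yields $(I_{\a}f)^{\#}(x) \lesssim M_{\a}f(x)$ and completes the proof.
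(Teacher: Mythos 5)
Your proof is correct and takes essentially the same route as the paper: both deduce the lemma from the pointwise bound $(I_{\a}f)^{\#}(x)\lesssim M_{\a}f(x)$ together with Lemma~\ref{lem876346754}. The only difference is that the paper simply cites this pointwise estimate from Adams (Proposition 3.3 of \cite{Adams}), whereas you prove it directly via the standard near/far decomposition; your sketch of that argument is sound (note only that the convergence of the dyadic sum comes from the extra factor $2^{-k}$ produced by the kernel-difference estimate, not from $\a<n$ per se).
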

\begin{proof}
	By \cite[Proposition 3.3]{Adams}, $(I_{\a}f)^{\#}(x)\leq C \, M_{\a}f(x)$
	holds with constant $C > 0$ independent of $f$ and $x\in \rn$. Using
	this fact, the statement follows from Lemma \ref{lem876346754}.
\end{proof}

Our main two side estimate is formulated in the following theorem.
\begin{thm}\label{thm4.1} Let $1<p<\infty$, $0<\a<n$, $v\in A_{\infty}$  and $f\in \Llocp$. Then for any cube $Q=Q(x_0,r_0)$
	\begin{equation}\label{eq041}
	\|I_{\a}f\|_{L_{p}(Q,v)} \thickapprox \|M_{\a}f\|_{L_{p}(Q,v)} + v(Q)^{{1} / {p}}\int_{\rn\backslash Q}\frac{f(y)dy}{|y-x_0|^{n-\a}},
	\end{equation}
	where constants in equivalency do not depend on $Q$ and $f$.
\end{thm}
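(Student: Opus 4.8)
The plan is to prove the two-sided estimate \eqref{eq041} by establishing the upper ($\lesssim$) and lower ($\gtrsim$) bounds separately, exploiting the natural splitting of $I_\a f$ according to whether the integration variable lies inside the enlarged cube $2Q$ or outside it. Throughout, fix the cube $Q = Q(x_0, r_0)$ and write $I_\a f = I_\a(f\chi_{2Q}) + I_\a(f\chi_{\rn\setminus 2Q})$. The first term is "local" and should be controlled by $M_\a f$ via Lemma~\ref{lem32745627}; the second term is "far" and, for $x \in Q$, should be essentially constant (independent of $x$) and comparable to the tail integral $\int_{\rn\setminus Q} f(y)|y-x_0|^{\a-n}\,dy$.

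For the \textbf{upper bound}, I would treat the far term first. For any $x \in Q$ and $y \in \rn\setminus 2Q$ one has $|x-y| \thickapprox |x_0 - y|$ (since $y$ is outside the doubled cube, the displacement $|x-x_0| \le \sqrt{n}\,r_0$ is controlled by a fixed fraction of $|x_0-y|$), so
\begin{equation*}
I_\a(f\chi_{\rn\setminus 2Q})(x) \thickapprox \int_{\rn\setminus 2Q}\frac{f(y)\,dy}{|y-x_0|^{n-\a}}
\end{equation*}
uniformly in $x \in Q$; taking the $L_p(Q,v)$ norm multiplies this constant by $v(Q)^{1/p}$. Since the integral over $2Q\setminus Q$ can only help and is in turn dominated by $M_\a f$ on $Q$, the tail $\int_{\rn\setminus Q}$ is comparable to $\int_{\rn\setminus 2Q}$ up to a term absorbed by $\|M_\a f\|_{L_p(Q,v)}$. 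For the local term, I would apply Lemma~\ref{lem32745627} to $f\chi_{2Q}$ to get $\|I_\a(f\chi_{2Q})\|_{L_p(Q,v)} \lesssim v(Q)^{1/p}(I_\a(f\chi_{2Q}))_Q + \|M_\a(f\chi_{2Q})\|_{L_p(Q,v)}$; the maximal term is $\le \|M_\a f\|_{L_p(Q,v)}$, and the mean value $(I_\a(f\chi_{2Q}))_Q$ must be shown to be $\lesssim$ the average of $M_\a f$ over $Q$, which follows because $(I_\a(f\chi_{2Q}))^{\#} \lesssim M_\a f$ together with Lemma~\ref{lem876346754} (or a direct Fubini computation on $2Q$).

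For the \textbf{lower bound}, the inequality $M_\a f \le C\, I_\a f$ from \eqref{eq001} immediately gives $\|M_\a f\|_{L_p(Q,v)} \lesssim \|I_\a f\|_{L_p(Q,v)}$. It then remains to dominate the tail term. Here I would use pointwise positivity: for $x \in Q$,
\begin{equation*}
I_\a f(x) \ge I_\a(f\chi_{\rn\setminus 2Q})(x) \thickapprox \int_{\rn\setminus 2Q}\frac{f(y)\,dy}{|y-x_0|^{n-\a}},
\end{equation*}
using the same two-sided kernel comparison $|x-y|\thickapprox|x_0-y|$ valid for $y$ outside $2Q$. Taking $L_p(Q,v)$ norms yields $v(Q)^{1/p}\int_{\rn\setminus 2Q} f(y)|y-x_0|^{\a-n}\,dy \lesssim \|I_\a f\|_{L_p(Q,v)}$, and the missing shell $2Q\setminus Q$ is recovered by noting $v(Q)^{1/p}\int_{2Q\setminus Q} f(y)|y-x_0|^{\a-n}\,dy \lesssim v(Q)^{1/p}|Q|^{\a/n-1}\int_{2Q} f \lesssim \|M_\a f\|_{L_p(Q,v)} \lesssim \|I_\a f\|_{L_p(Q,v)}$, where the last step again invokes \eqref{eq001}. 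Combining the two directions gives \eqref{eq041}.

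The \textbf{main obstacle} I anticipate is the uniform kernel comparison $|x-y| \thickapprox |x_0-y|$ for $x\in Q$, $y \notin 2Q$, and the careful bookkeeping needed to pass between the tail over $\rn\setminus Q$ (which appears in the statement) and the tail over $\rn\setminus 2Q$ (which arises naturally from the $2Q$-splitting); controlling the intermediate annulus $2Q\setminus Q$ by $\|M_\a f\|_{L_p(Q,v)}$ in both directions is the delicate point. The weight $v \in A_\infty$ enters only implicitly, through Lemma~\ref{lem32745627} and Lemma~\ref{lem876346754}, so the $A_\infty$ hypothesis is not a direct source of difficulty in the estimate itself but is essential for the $L_p$-control of $I_\a(f\chi_{2Q})$ by its sharp-maximal-function substitute.
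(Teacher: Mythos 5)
Your proof is correct and follows essentially the same route as the paper's: the lower bound comes from \eqref{eq001} together with the far-field kernel comparison $|x-y|\approx|x_0-y|$ (the paper uses $x\in Q/2$, $y\notin Q$ plus doubling of $v$ where you use $x\in Q$, $y\notin 2Q$ plus an estimate of the shell $2Q\setminus Q$ by $M_\a f$ --- both work), and the upper bound comes from Lemma \ref{lem32745627} plus a Fubini-type bound on the local average, the only cosmetic difference being that the paper applies Lemma \ref{lem32745627} to $f$ itself and imports the $L_1(Q)$ estimate for $I_\a f$ from \cite[Lemma 2.5]{gm}, whereas you split $f$ into $f\chi_{2Q}$ and $f\chi_{\rn\setminus 2Q}$ before invoking the lemma. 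One small correction: of your two suggested justifications for $(I_{\a}(f\chi_{2Q}))_Q\lesssim\inf_{x\in Q}M_{\a}f(x)$, only the direct Fubini computation works --- the bound $(I_{\a}(f\chi_{2Q}))^{\#}\lesssim M_{\a}f$ combined with Lemma \ref{lem876346754} cannot control this average, since the average $|g|_Q$ is exactly the term that Lemma \ref{lem876346754} leaves untouched.
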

\begin{proof}
	Let $Q=Q(x_0,r_0)$ be any cube in $\rn$. It's clear that $x\in Q/2$,
	$y\in \rn\backslash Q$ implies $|y-x|\approx |y-x_0|$. Therefore
	\begin{equation}\label{eq16}
	\|I_{\a}(f\chi_{\dual{Q}})\|_{L_{p}(Q/2,v)} \thickapprox v(Q/2)^{{1} / {p}}\int_{\rn\backslash	Q}\frac{f(y)}{|y-x_0|^{n-\a}}dy.
	\end{equation}
	
	Since $v\in \mathcal D$, by
	\eqref{eq16}
	\begin{equation*}
	\|I_{\a}f\|_{L_{p}(Q,v)}\ge	\|I_{\a}(f\chi_{\dual{Q}})\|_{L_{p}(Q/2,v)} \thickapprox v(Q)^{{1} / {p}}\int_{\rn\backslash Q}\frac{f(y)}{|y-x_0|^{n-\a}}dy,
	\end{equation*}
	and by \eqref{eq001}
	$$
	\|I_{\a}f\|_{L_{p}(Q,v)}\gtrsim \|M_{\a}f\|_{L_{p}(Q,v)},
	$$
	we get
	$$
	\|I_{\a}f\|_{L_{p}(Q,v)} \gtrsim \|M_{\a}f\|_{L_{p}(Q,v)} +	v(Q)^{{1} / {p}}\int_{\rn\backslash Q}\frac{f(y)dy}{|y-x_0|^{n-\a}}.
	$$
	
	Let us now to prove that
	$$
	\|I_{\a}f\|_{L_{p}(Q,v)}\lesssim\|M_{\a}f\|_{L_{p}(Q,v)} +	v(Q)^{{1} / {p}}\int_{\rn\backslash	Q}\frac{f(y)dy}{|y-x_0|^{n-\a}}.
	$$
	
	At first note that if
	$$
	\int_{\rn\backslash (2Q)}\frac{|f(y)|}{|y-x_0|^{n-\a}}dy=\infty,
	$$
	then by \eqref{eq16}
	$$
	\|I_{\a}f\|_{L_{p}(Q,v)}\ge\|I_{\a}(f\chi_{\dual{(2Q)}})\|_{L_{p}(Q,v)}	= \infty,
	$$
	then there is nothing to prove.
	
	Assume now that
	$$
	\int_{\rn\backslash (2Q)} \frac{f(y)}{|y-x_0|^{n-\a}} \,dy < \infty.
	$$
	In this case from \cite[Lemma 2.5]{gm} (applied with $p_1=p_2=1$) it follows that $I_{\a}f$ is integrable over $Q$ and the inequality
	\begin{equation}\label{eq.main_1}
	\|I_{\alpha}f\|_{L_1(Q)} \le c \, |Q| \int_{\rn\backslash (2Q)}\frac{f(y)dy}{|y-x_0|^{n-\a}}dy + c|Q|^{{\a}/{n}}\|f\|_{L_1(2Q)}
	\end{equation}
	holds.
	
	By Lemma \ref{lem32745627}, on using \eqref{eq.main_1}, we get that
	\begin{align*}
	\|I_{\a}f\|_{L_{p}(Q,v)}&\leq c \, \|M_{\a}f\|_{L_{p}(Q,v)} + c\, v(Q)^{{1} / {p}}|Q|^{{\a}/{n}-1}\|f\|_{L_1(2Q)} + cv(Q)^{{1} / {p}}\left(\int_{\rn\backslash
		(2Q)}\frac{f(y)dy}{|y-x_0|^{n-\a}}dy\right)\\
	& \leq c \, \|M_{\a}f\|_{L_{p}(Q,v)} + c \,v(Q)^{{1} / {p}} \inf_{x\in Q} M_{\a}f(x) +cv(Q)^{{1} / {p}}\left(\int_{\rn\backslash (2Q)}\frac{f(y)dy}{|y-x_0|^{n-\a}}dy\right)	\\
	& \leq c \, \|M_{\a}f\|_{L_{p}(Q,v)} + c \, v(Q)^{{1} / {p}}\left(\int_{\rn\backslash (2Q)}\frac{f(y)dy}{|y-x_0|^{n-\a}}dy\right).
	\end{align*}
\end{proof}

\begin{rem}\label{rem3475y37}
	Since for any function $f\geq 0$ with compact support in $\rn$ such
	that $I_{\a}f\in \Lloc$
	$$
	(I_{\a}f)^{\#}(x)\thickapprox M_{\a}f(x),~~ \mbox{for any}~~ x\in\rn
	$$
	(see \cite[Proposition 3.3 and 3.4]{Adams} or \cite[Lemma 4.1
	(i)]{AdXi}), then the inequality \eqref{eq041} could be written in
	the following form
	\begin{equation}\label{eq.1.1}
	\|I_{\a}f\|_{L_{p}(Q,v)} \thickapprox
	\|(I_{\a}f)^{\#}\|_{L_{p}(Q,v)} +
	v(Q)^{{1} / {p}}\int_{\rn\backslash
		Q}\frac{f(y)dy}{|y-x_0|^{n-\a}}.
	\end{equation}
	Inequality \eqref{eq.1.1} was proved in \cite[Lemma 1.7]{gm}, when $v \equiv 1$.
\end{rem}

\begin{lem}\label{lem3.4}
	Let $1<p<\infty$, $0<\a<n$, $v\in A_{\infty}$ and $f\in \Llocp$. Then for
	any cube $Q\subset\rn$
	\begin{equation*}
	\|I_{\a}(f\chi_{(2Q)})\|_{L_{p}(Q,v)} \thickapprox	\|M_{\a}(f\chi_{(2Q)})\|_{L_{p}(Q,v)},
	\end{equation*}
	where constants in equivalency do not depend on $Q$ and $f$.
\end{lem}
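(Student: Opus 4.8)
The plan is to deduce the statement directly from the two-sided estimate of Theorem~\ref{thm4.1}, applied not to $f$ but to the truncation $g := f\chi_{(2Q)}$, together with the elementary pointwise bound \eqref{eq001}. Since $f \in \Llocp$, we also have $g \in \Llocp$, so Theorem~\ref{thm4.1} is applicable to $g$ over the cube $Q=Q(x_0,r_0)$ and yields
$$
\|I_{\a} g\|_{L_{p}(Q,v)} \thickapprox \|M_{\a} g\|_{L_{p}(Q,v)} + v(Q)^{{1}/{p}}\int_{\rn\backslash Q}\frac{g(y)\,dy}{|y-x_0|^{n-\a}}.
$$
The direction ``$\gtrsim$'' of the claimed equivalence is immediate and needs no appeal to Theorem~\ref{thm4.1}: by \eqref{eq001} we have $M_{\a}g(x)\le C\,I_{\a}g(x)$ pointwise, whence $\|M_{\a}g\|_{L_{p}(Q,v)}\lesssim\|I_{\a}g\|_{L_{p}(Q,v)}$. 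Thus the entire content is the reverse inequality, for which it suffices to absorb the tail term above into $\|M_{\a}g\|_{L_{p}(Q,v)}$.

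The first (and essentially only) observation is that $g$ is supported in $2Q$, so the tail integral collapses onto the annular region $(2Q)\backslash Q$:
$$
\int_{\rn\backslash Q}\frac{g(y)\,dy}{|y-x_0|^{n-\a}} = \int_{(2Q)\backslash Q}\frac{f(y)\,dy}{|y-x_0|^{n-\a}}.
$$
For $y\in(2Q)\backslash Q$ one has $|y-x_0|\thickapprox r_0\thickapprox |Q|^{1/n}$, since $y$ lies outside $Q$ but inside $2Q$; hence $|y-x_0|^{\a-n}\lesssim |Q|^{\a/n-1}$ and therefore
$$
\int_{(2Q)\backslash Q}\frac{f(y)\,dy}{|y-x_0|^{n-\a}} \lesssim |Q|^{\a/n-1}\|f\|_{L_1(2Q)}.
$$
This is exactly the geometric estimate already carried out inside the proof of Theorem~\ref{thm4.1}, so it is routine.

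It then remains to dominate $|Q|^{\a/n-1}\|f\|_{L_1(2Q)}$ by the maximal-function term. For every $x\in Q$, choosing a ball $B(x,t)\supset 2Q$ of radius $t\thickapprox r_0$ gives $M_{\a}g(x)\ge |B(x,t)|^{\a/n-1}\int_{2Q}f \gtrsim |Q|^{\a/n-1}\|f\|_{L_1(2Q)}$, so that $|Q|^{\a/n-1}\|f\|_{L_1(2Q)}\lesssim \inf_{x\in Q}M_{\a}g(x)$. Since trivially $\inf_{x\in Q}M_{\a}g(x)\cdot v(Q)^{1/p}\le \|M_{\a}g\|_{L_{p}(Q,v)}$, we arrive at
$$
v(Q)^{{1}/{p}}\int_{\rn\backslash Q}\frac{g(y)\,dy}{|y-x_0|^{n-\a}} \lesssim \|M_{\a}g\|_{L_{p}(Q,v)}.
$$
Inserting this back into the equivalence furnished by Theorem~\ref{thm4.1} gives $\|I_{\a}g\|_{L_{p}(Q,v)}\lesssim\|M_{\a}g\|_{L_{p}(Q,v)}$, and combined with the ``$\gtrsim$'' direction this proves the asserted equivalence. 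I expect no genuine obstacle here: once Theorem~\ref{thm4.1} is available, the proof is purely a tail-absorption argument, and the only mildly delicate points are the geometric comparison $|y-x_0|\thickapprox|Q|^{1/n}$ on $(2Q)\backslash Q$ and the infimum bound, both of which repeat computations already used in establishing Theorem~\ref{thm4.1}.
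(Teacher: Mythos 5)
Your argument is correct and coincides with the paper's own proof: both apply Theorem~\ref{thm4.1} to $f\chi_{(2Q)}$, observe that the tail integral reduces to $(2Q)\backslash Q$ where $|y-x_0|\thickapprox r_0$, and absorb the resulting term $v(Q)^{1/p}|Q|^{\a/n-1}\|f\|_{L_1(2Q)}$ via $\inf_{x\in Q}M_{\a}(f\chi_{(2Q)})(x)$. No discrepancies worth noting.
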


\begin{proof}
	Let $Q=Q(x_0,r_0)$. In view of \eqref{eq001} we need to show that
	\begin{equation*}
	\|I_{\a}(f\chi_{(2Q)})\|_{L_{p}(Q,v)} \lesssim \|M_{\a}(f\chi_{(2Q)})\|_{L_{p}(Q,v)}.
	\end{equation*}
	By Theorem \ref{thm4.1}, we have
	\begin{equation*}
	\|I_{\a}(f\chi_{2Q})\|_{L_{p}(Q,v)} \lesssim \|M_{\a}(f\chi_{2Q})\|_{L_{p}(Q,v)} +
	v(Q)^{{1} / {p}}\int_{(2Q) \backslash Q}\frac{f(y)dy}{|y-x_0|^{n-\a}}
	\end{equation*}
	But if $y\in (2Q) \backslash Q$, then $|y-x_0| \approx r_0$. Hence
	\begin{equation*}
	\begin{split}
	v(Q)^{{1} / {p}}\int_{(2Q) \backslash	Q}\frac{f(y)dy}{|y-x_0|^{n-\a}} & \approx v(Q)^{{1} / {p}}\frac{1}{|Q|^{1-{\a} / {n}}}\int_{2Q}f(y)dy	\\
	& \lesssim v(Q)^{{1} / {p}} \inf_{x\in	Q} M_{\a}(f\chi_{(2Q)})(x) \lesssim \|M_{\a}(f\chi_{(2Q)})\|_{L_{p}(Q,v)}.
	\end{split}
	\end{equation*}
\end{proof}

\begin{rem}\label{rem4.4}
	It is easy to see that all statements in this section hold true for
	balls instead of cubes.
\end{rem}

The following weighted estimate for fractional maximal functions over balls is true.
\begin{lem}\label{lem4.35574943}
	Let $0<p\leq \infty$, $0\leq \a <n$, $v$ be a weight function on $\rn$
	and $f\in \Llocp$. Then for any ball $B=B(x,r)$ in $\rn$
	\begin{equation}\label{eq100001}
	\|M_{\a}f\|_{L_{p}(B,v)} \gtrsim v(B)^{{1} / {p}} \left(\sup_{t>r}|B(x,t)|^{\a/n-1}\int_{B(x,t)} f(y) \, dy\right),
	\end{equation}
	where constant does not depend on $B$ and $f$.
\end{lem}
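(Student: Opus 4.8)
The plan is to establish a pointwise lower bound for $M_{\a}f(y)$ that is uniform in $y \in B$, and then take the $L_{p}(B,v)$-quasinorm. The engine of the argument is a single elementary inclusion of balls. First I would record that if $y \in B = B(x,r)$ and $t > 2r$, then $B(x,t/2) \subset B(y,t)$: indeed, for any $z \in B(x,t/2)$ we have $|z-y| \le |z-x| + |x-y| < t/2 + r < t$, since $r < t/2$.

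Next, since $f \ge 0$, I would restrict the supremum in the definition of $M_{\a}$ to radii $t > 2r$ and use this inclusion together with $|B(y,t)| = |B(x,t)|$ to obtain
\begin{equation*}
M_{\a}f(y) \ge \sup_{t>2r}\frac{1}{|B(x,t)|^{1-\a/n}}\int_{B(x,t/2)} f(z)\,dz.
\end{equation*}
Because $|B(x,t)| = 2^n |B(x,t/2)|$, one has $|B(x,t)|^{\a/n-1} = 2^{\a-n}|B(x,t/2)|^{\a/n-1}$, and after the substitution $t \mapsto t/2$ inside the supremum this reads
\begin{equation*}
M_{\a}f(y) \ge 2^{\a-n}\sup_{t>r}|B(x,t)|^{\a/n-1}\int_{B(x,t)} f(z)\,dz.
\end{equation*}
The right-hand side no longer depends on the point $y \in B$.

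Finally, since this lower bound is constant as a function of $y$, I would take the $L_{p}(B,v)$-quasinorm of both sides; the constant factors out of the norm and $\|\chi_B\|_{L_{p}(B,v)} = v(B)^{1/p}$ delivers \eqref{eq100001}. There is no genuine obstacle in this argument: the only points requiring a modicum of care are the geometric inclusion $B(x,t/2)\subset B(y,t)$ and the bookkeeping of the dimensional constant $2^{\a-n}$, both of which are routine. In fact this is verbatim the reasoning already carried out in Lemma~\ref{lem4.355749430}, so one may simply invoke that statement.
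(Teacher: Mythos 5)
Your argument is correct and is essentially identical to the paper's own proof: the same inclusion $B(x,t/2)\subset B(y,t)$ for $y\in B(x,r)$, $t>2r$, the same pointwise lower bound $M_{\a}f(y)\ge 2^{\a-n}\sup_{t>r}|B(x,t)|^{\a/n-1}\int_{B(x,t)}f$, and the same final step of integrating a $y$-independent bound over $B$. As you note, this lemma is a verbatim repetition of Lemma~\ref{lem4.355749430}, whose proof in the paper is word for word the one you reconstructed.
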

\begin{proof}
	If $y \in B(x,r)$ and $t>2r$, then $B(x,{t} / {2})\subset B(y,t)$
	and
	\begin{align*}
	M_{\a}f(y) & \ge 2^{\alpha-n} \sup_{t>2r} \frac{1}{|B(x,{t} / {2})|^{1-\a/n}}\int_{B(x,{t} / {2})} f(z) \, dz \\
	& = 2^{\alpha-n} \sup_{t>r}|B(x,t)|^{\a/n-1}\int_{B(x,t)} f(y)dy.
	\end{align*}
	Thus \eqref{eq100001} holds with constant independent of $B$ and $f$.
\end{proof}

%%%%%%%%%%%%%%%%%%%%%%%%%%%%%%%%%%%%%%%%%%%%%%%%%%%%%%%%%%%%%%%%%%%%%%%%%%%%%%%%%%%%%%%%%%%%%%%%%%%%%%%%%%%%%%%%%%%%%%%%%%%%%%%%%%%%%%%%%
%%%%%%%%%%%%%%%%%%%%%%%%%%%%%%%%%%%%%%%%%%%%%%%%%%%%%%%%%%%%%%%%%%%%%%%%%%%%%%%%%%%%%%%%%%%%%%%%%%%%%%%

\

\section{Characterization of the two-operator weighted norm inequality}\label{Sect5}

\

In this section, we present characterization of the inequality
\eqref{eq3248579857610}. The following theorem holds.
\begin{thm}\label{lem5.5}
	Let $u_0$, $u_1$ and $u_2$ be a continuous weight function defined on
	$(0,\infty)$. Assume that $v_1$ and $v_2$ are weight functions on $\rn$. Then the inequality 
	\begin{equation}\label{eq324857985761}
	\sup_{ r>0 }u_2(r)\int_{\rn\backslash	\Br} g(y) v_2(y) \, dy \le C \sup_{r>0} u_1(r) \left(\sup_{t>r} u_0(t)\int_{\Bt} g(y)v_1(y)\, dy\right).
	\end{equation}
	holds for any non-negative measurable functions $g$ on $\rn$ if and only if 
	\begin{align*}
	I : = \sup_{ r>0 } U_2(r) \bigg(   \big(U_1(r)\big)^{-1} \bigg( \esup_{y\in \rn\backslash \Br} \bigg( v_1(y)^{-1} v_2(y) \bigg) & \notag \\
	& \hspace{-3cm} + \int_r^{\infty}\esup_{y\in \rn\backslash \Bt} \bigg( v_1(y)^{-1} v_2(y) \bigg) \,d \,\big( U_1(t)\big)^{-1}  \bigg) < \infty,
	\end{align*}
	where
	$$
	U_1 (r) := \sup_{r < t < \infty } u_0(t) \bigg( \sup_{0<s<t} u_1(s) \bigg), \qquad U_2 (r) := \sup_{0<t<r} u_2(t), \qquad r > 0.
	$$
	
	Moreover, the best constant in \eqref{eq324857985761}, that is,  
	$$
	B : = \sup_{g \ge 0} \frac{	\sup_{ r>0 }u_2(r)\int_{\rn\backslash	\Br} g(y) v_2(y) \, dy}{\sup_{r>0} u_1(r) \left(\sup_{t>r} u_0(t)\int_{\Bt} g(y)v_1(y)\, dy\right)}
	$$
	satisfies $B \approx I$.
\end{thm}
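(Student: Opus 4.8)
The plan is to reduce the inequality \eqref{eq324857985761}, which mixes a genuinely $n$-dimensional structure with a one-dimensional supremum, to a purely one-dimensional weighted inequality that can be handled by a standard duality/Hardy-type argument. The key simplification comes from the following observation: both sides only ``see'' the function $g$ through its radial averages. Specifically, for the left-hand side I would write, using the layer-cake / radial decomposition,
\[
\int_{\rn\backslash\Br} g(y)v_2(y)\,dy = \int_r^{\infty} \!\!\left(\int_{\partial B(0,\rho)} g\,v_2\,d\sigma\right)d\rho,
\]
and similarly for the inner integral on the right. The presence of the factor $v_1(y)^{-1}v_2(y)$ in the quantity $I$ strongly suggests that the correct strategy is to bound $v_2(y) \le \big(\esup_{z\in\rn\backslash B(0,|y|)}v_1(z)^{-1}v_2(z)\big)\,v_1(y)$ pointwise, thereby converting the two different weights $v_1,v_2$ into a single weight $v_1$ multiplied by a \emph{radially monotone} (nonincreasing in $|y|$) scalar factor
\[
W(\rho) := \esup_{y\in\rn\backslash B(0,\rho)}\big(v_1(y)^{-1}v_2(y)\big).
\]
This is the technical heart of the matter and the step I expect to be the main obstacle: one must verify both that this pointwise majorization does not lose constants (sufficiency) and that a matching lower bound can be achieved by testing on functions concentrated near the level sets of $v_1^{-1}v_2$ (necessity), so that $B\approx I$ rather than merely $B\lesssim I$.

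Once this reduction is in place, the inequality takes the schematic one-dimensional form
\[
\sup_{r>0}u_2(r)\int_r^{\infty}W(\rho)\,d\mu_g(\rho) \;\le\; C\,\sup_{r>0}u_1(r)\Big(\sup_{t>r}u_0(t)\,\mu_g\big((0,t)\big)\Big),
\]
where $d\mu_g(\rho) := \big(\int_{\partial B(0,\rho)}g\,v_1\,d\sigma\big)\,d\rho$ is an arbitrary nonnegative measure on $\I$. Next I would absorb the nested suprema on each side into the single kernels $U_1$ and $U_2$ exactly as they are defined in the statement: on the right, replacing $u_1(s)$ by its least nonincreasing majorant $\sup_{0<s<t}u_1(s)$ and then taking $\sup_{r<t}u_0(t)(\cdots)$ produces $U_1(r)$, which is nonincreasing; on the left, $\sup_{0<t<r}u_2(t)=U_2(r)$ is nondecreasing. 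Crucially, replacing $u_1,u_2,u_0$ by the monotone envelopes $U_1,U_2$ changes neither side by more than an absolute constant, because the suprema are taken over $g\ge0$ (so one may always test with measures supported on a single small interval).

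With the monotone reformulation
\[
\sup_{r>0}U_2(r)\int_r^{\infty}W(\rho)\,d\mu(\rho) \;\le\; C\,\sup_{r>0}U_1(r)\,\mu\big((0,r)\big),
\qquad \mu\ge0,
\]
the proof becomes a textbook supremum-type ($L^1$--$L^\infty$) Hardy inequality. I would prove $B\lesssim I$ by fixing $r$, splitting the inner integral according to whether $\rho$ lies in a dyadic shell, and integrating by parts the Stieltjes integral $\int_r^{\infty}W\,d\mu$ against $d(U_1)^{-1}$; the boundary term reproduces $W(r)=\esup_{y\in\rn\backslash\Br}(v_1^{-1}v_2)$ and the integral term reproduces $\int_r^{\infty}W(t)\,d(U_1(t))^{-1}$, which together are exactly the bracketed factor in $I$ after multiplying by $U_1(r)^{-1}$. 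For the reverse inequality $I\lesssim B$, I would, for each fixed $r$ and each $\varepsilon>0$, choose a test measure $\mu$ — a point mass or a thin annular average of $g$ — realizing the essential supremum defining $W$ near a chosen radius, plug it into the ratio defining $B$, and optimize over the choice of radius in $(r,\infty)$; this recovers both the boundary and the integral contributions to $I$. The only delicate point in this last step is that the integral term $\int_r^{\infty}W\,d(U_1)^{-1}$ requires testing against a \emph{spread-out} measure rather than a single atom, which is the standard device for supremal operators and where care with the monotonicity of $U_1$ and $W$ is needed.
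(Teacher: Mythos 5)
Your proposal is correct in outline but takes a genuinely different route from the paper, and the difference is mostly one of labour. The paper performs the same first reduction you describe --- replacing $u_2$ by its non-decreasing envelope $U_2$ and collapsing the iterated supremum on the right into the non-increasing envelope $U_1$ --- but this step is an \emph{exact identity}, not an equivalence up to constants: it follows from \eqref{eq5.2}--\eqref{eq5.3} because $r\mapsto\int_{\rn\setminus\Br}gv_2\,dy$ is non-increasing and $r\mapsto\int_{\Br}gv_1\,dy$ is non-decreasing, so no testing argument is needed there. After that, instead of your pointwise majorization $v_2(y)\le W(|y|)v_1(y)$ and passage to polar coordinates, the paper substitutes $g\mapsto g/v_1$, interchanges the suprema over $r$ and over $g$, and invokes the multidimensional reverse Hardy inequality \cite[Theorem 2.4]{gm1}, which returns the exact value of the inner supremum and hence $B\approx I$ in one stroke. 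What your approach buys is self-containedness: your Stieltjes integration-by-parts argument for $B\lesssim I$ is sound and is essentially the internal mechanism of the cited theorem. What it costs is the necessity direction $I\lesssim B$: your majorization $v_2\le W(|\cdot|)v_1$ only bounds the left-hand side from \emph{above}, so the lower bound cannot be pushed through the same one-dimensional reduction and must be obtained with genuine $n$-dimensional test functions (spread-out annular masses nearly realizing the essential suprema $W(\rho)$ at many radii simultaneously, as you note). That construction is exactly the hard part of \cite[Theorem 2.4]{gm1} and is left unexecuted in your sketch; carrying it out would amount to reproving the cited lemma, whereas citing it, as the paper does, is the shorter path.
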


\begin{proof} 
	Recall that, if $F$ is a non-negative non-increasing function on $\I$, then
	\begin{equation}\label{eq5.2}
	\esup_{t \in (0,\infty)} F(t)G(t) = \esup_{t \in (0,\infty)} F(t)
	\esup_{\tau \in (0,t)} G(\tau);	
	\end{equation}
	likewise, when $F$ is a non-negative non-decreasing function on $\I$, then
	\begin{equation}\label{eq5.3}
	\esup_{t \in (0,\infty)} F(t)G(t) = \esup_{t \in (0,\infty)} F(t)
	\esup_{\tau \in (t,\infty)} G(\tau)
	\end{equation}
	(see, for instance, \cite[p. 85]{gp2}).

	Applying \eqref{eq5.2} and \eqref{eq5.3}, we obtain that
	\begin{align*}
	\sup_{ r>0 }u_2(r)\int_{\rn\backslash	\Br} g(y) v_2(y) \, dy & = \sup_{ r>0 } \left( \sup_{0 < t < r} u_2(t) \right) \int_{\rn\backslash	\Br} g(y) v_2(y) \, dy\\
	\intertext{and}
	\sup_{r>0} u_1(r) \left(\sup_{t>r} u_0(t)\int_{\Bt} g(y)v_1(y)\, dy\right) & = \sup_{r>0} \left( \sup_{0<s<r} u_1(s) \right) \left(\sup_{t>r} u_0(t)\int_{\Bt} g(y)v_1(y)\, dy\right) \\
	& = \sup_{r>0} \left( \sup_{0<s<r} u_1(s) \right) u_0(r) \int_{\Br} g(y)v_1(y)\, dy \\
	& = \sup_{r>0} \left( \sup_{r < t < \infty }\left( \sup_{0<s<t} u_1(s) \right) u_0(t) \right) \int_{\Br} g(y)v_1(y)\, dy.
	\end{align*}
	
	We can rewrite inequality \eqref{eq324857985761} in the following
	form:
	\begin{equation}\label{eqd;lbnslfgnb xcmvn}
	\sup_{ r>0 }U_2(r)\int_{\rn\backslash	\Br} g(y) v_2(y) \, dy \le C \sup_{r>0} U_1(r) \int_{\Br} g(y)v_1(y)\, dy.
	\end{equation}
	
	Obviously, inequality \eqref{eqd;lbnslfgnb xcmvn} is equivalent to the inequality
	\begin{equation}\label{eq.equiv.}
	\sup_{ r>0 }U_2(r)\int_{\rn\backslash	\Br} g(y) v_1(y)^{-1} v_2(y)\, dy \lesssim \sup_{r>0} U_1(r) \int_{\Br} g(y) \, dy,
	\end{equation}
	and
	\begin{align*}
	B = \sup_{ g\geq 0}\frac{\sup_{ r>0 }U_2(r)\int_{\rn\backslash	\Br} g(y) v_1(y)^{-1} v_2(y)\, dy}{\sup_{r>0} U_1(r) \int_{\Br} g(y)\, dy}.
	\end{align*}
	
	Observe that if $U_1 (t)=\infty$ for some $t \in (0,\infty)$,
	then \eqref{eq.equiv.} holds trivially and the inequality
	\eqref{eq324857985761} also satisfied. Assume that $U_1 (t)<\infty$ for
	all $t \in (0,\infty)$.
	
	Interchanging the suprema, we have that
	\begin{align*}
	B & = \sup_{ r>0 } U_2(r)\sup_{ g\geq 0}\frac{\int_{\rn} g(y) \chi_{\rn\backslash	\Br} (y) v_1(y)^{-1} v_2(y)\, dy}{\sup_{r>0} U_1(r) \int_{\Br} g(y)\, dy}.
	\end{align*}
	
	By \cite[Theorem 2.4]{gm1},	we get that
	\begin{align*}
	B = & \sup_{ r>0 } U_2(r) \left( \int_0^{\infty}\esup_{y\in \rn\backslash \Bt} \left(\chi_{\rn\backslash	\Br} (y) v_1(y)^{-1} v_2(y)\right)d\big(U_1(t)\big)^{-1} \right. \\
	& \left. + \big(U_1(0)\big)^{-1}\esup_{y\in \rn}\left(\chi_{\rn\backslash	\Br} (y) v_1(y)^{-1} v_2(y)\right) \right). 
	\end{align*}
	
	Hence
	\begin{align*}
	B = & \sup_{ r>0 } U_2(r) \bigg( \bigg( \esup_{y\in \rn\backslash \Br} v_1(y)^{-1} v_2(y) \bigg) \,\int_0^r d \big( U_1(t) \big)^{-1}  \\
	& + \int_r^{\infty}\esup_{y\in \rn\backslash \Bt} \bigg( \chi_{\rn\backslash	\Br} (y) v_1(y)^{-1} v_2(y) \bigg) \,d \,\big( U_1(t)\big)^{-1} \\
	& + (U_1(0))^{-1} \bigg( \esup_{y\in \rn\backslash \Br} v_1(y)^{-1} v_2(y) \bigg) \\
	= & \sup_{ r>0 } U_2(r) \bigg(   \big(U_1(r)\big)^{-1} \bigg( \esup_{y\in \rn\backslash \Br} \bigg( v_1(y)^{-1} v_2(y) \bigg) \\
	& + \int_r^{\infty}\esup_{y\in \rn\backslash \Bt} \bigg( v_1(y)^{-1} v_2(y) \bigg) \,d \,\big( U_1(t)\big)^{-1}  \bigg).
	\end{align*}
\end{proof}

\begin{cor}\label{cor5.5}
	Let $0 < \beta < \infty$ and $u$ be a continuous weight function defined on
	$(0,\infty)$. Then the inequality 
	\begin{equation}\label{eq.cor}
	\sup_{ r>0 }u(r)\int_{\rn\backslash	\Br} |y|^{-\b}g(y) \,dy \lesssim \sup_{r>0}u(r) \left(\sup_{t>r}t^{-\b}\int_{\Bt} g(y) \,dy\right)
	\end{equation}
	holds for any non-negative measurable functions $g$ on $\rn$ if and only if 
	\begin{align*}
	\sup_{r>0} r^{\beta} \,\bigg( \sup_{r < t < \infty } t^{-\beta} \bigg( \sup_{0<s<t} u(s) \bigg)\bigg) \,\int_r^{\infty} t^{-\beta - 1} \,\bigg( \sup_{t < \tau < \infty } \tau^{-\beta} \bigg( \sup_{0<s<\tau} u(s) \bigg)\bigg)^{-1}  \,dt < \infty.
	\end{align*}
\end{cor}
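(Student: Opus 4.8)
The plan is to specialize Theorem \ref{lem5.5} to the data $u_0(t)=t^{-\b}$, $u_1=u_2=u$, $v_1\equiv 1$ and $v_2(y)=|y|^{-\b}$, since with these choices the general inequality \eqref{eq324857985761} is literally \eqref{eq.cor}. First I would record the auxiliary functions produced by the theorem, namely $U_2(r)=\sup_{0<s<r}u(s)$ and $U_1(r)=\sup_{r<t<\infty}t^{-\b}\big(\sup_{0<s<t}u(s)\big)$, the latter being exactly the inner factor in the claimed condition. Because $\rn\setminus\Bt=\{y:|y|\ge t\}$ and $|y|^{-\b}$ is radially decreasing, one has $\esup_{y\in\rn\setminus\Bt}\big(v_1(y)^{-1}v_2(y)\big)=\esup_{y\in\rn\setminus\Bt}|y|^{-\b}=t^{-\b}$. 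Substituting this into the criterion of Theorem \ref{lem5.5} (assuming the nontrivial case $U_1(t)<\infty$ for all $t$) shows that \eqref{eq.cor} holds if and only if
$$
I:=\sup_{r>0}U_2(r)\left((U_1(r))^{-1}r^{-\b}+\int_r^\infty t^{-\b}\,d\big(U_1(t)\big)^{-1}\right)<\infty .
$$

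The decisive step is to simplify the inner bracket. Writing $V:=(U_1)^{-1}$, which is non-negative and non-decreasing since $U_1$ is non-increasing, I would integrate by parts on $[r,N]$ to obtain $\int_r^N t^{-\b}\,dV=N^{-\b}V(N)-r^{-\b}V(r)+\b\int_r^N t^{-\b-1}V(t)\,dt$ and then let $N\to\infty$. The boundary term $N^{-\b}V(N)=N^{-\b}(U_1(N))^{-1}$ controls everything: if $\limsup_{N}N^{-\b}V(N)>0$ then, by monotonicity of $V$, both $\int_r^\infty t^{-\b-1}V(t)\,dt$ and $\int_r^\infty t^{-\b}\,dV$ diverge, so $I=\infty$ and the corollary's integral is infinite at the same time and there is nothing to prove; otherwise $N^{-\b}V(N)\to 0$, the boundary term cancels $r^{-\b}V(r)$, and one gets the clean identity
$$
(U_1(r))^{-1}r^{-\b}+\int_r^\infty t^{-\b}\,d\big(U_1(t)\big)^{-1}=\b\int_r^\infty t^{-\b-1}\big(U_1(t)\big)^{-1}\,dt=:\b\,J(r).
$$
Thus $I=\b\sup_{r>0}U_2(r)J(r)$, and it remains to compare this with the corollary's quantity $K:=\sup_{r>0}r^{\b}U_1(r)J(r)$.

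For the comparison I would use two-sided control of the outer weights. The bound $U_2(r)\le r^{\b}U_1(r)$ follows from $U_1(r)\ge\lim_{t\to r^+}t^{-\b}\sup_{0<s<t}u(s)=r^{-\b}U_2(r)$ (using continuity of $u$), and termwise it gives $I\le\b K$ immediately; this yields the sufficiency half of the corollary. The reverse bound is the delicate point, because $r^{\b}U_1(r)$ may be strictly larger than $U_2(r)$ at the same scale. Here I would exploit that $U_1(r)$ is a supremum: for each $r$ pick $t=t(r)>r$ with $t^{-\b}U_2(t)\ge\tfrac12U_1(r)$, split $J(r)=\int_r^t+\int_t^\infty$, and bound $\int_r^t s^{-\b-1}V(s)\,ds\le V(t)r^{-\b}/\b$ by monotonicity of $V$, together with $J(t)\ge V(t)t^{-\b}/\b$. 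Combining these estimates should give $r^{\b}U_1(r)J(r)\lesssim U_2(t)J(t)\le\sup_\rho U_2(\rho)J(\rho)$, hence $K\le \tfrac{4}{\b}I$; this is the necessity half.

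The hardest part is precisely this reconciliation of the two outer weights: the pointwise comparison goes only one way, so the reverse estimate genuinely needs the extremal scale $t(r)$ realizing $U_1(r)$ as a supremum, together with the monotonicity of $(U_1)^{-1}$, to transport the tail integral $J(r)$ down to $J(t)$. Once both inequalities $\tfrac{\b}{4}K\le I\le\b K$ are established (with constants depending only on $\b$), Theorem \ref{lem5.5} yields $B\ap K$ for the best constant, and in particular \eqref{eq.cor} holds exactly when $K<\infty$, which is the stated condition.
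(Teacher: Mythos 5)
Your proposal is correct and follows the same route as the paper: specialize Theorem \ref{lem5.5} to $u_0(t)=t^{-\beta}$, $u_1=u_2=u$, $v_1\equiv 1$, $v_2(y)=|y|^{-\beta}$, note that the term $U_2(r)\big(U_1(r)\big)^{-1}r^{-\beta}$ is harmless because $r^{-\beta}U_2(r)\le U_1(r)$, and reduce everything to the equivalence between $\sup_{r>0}U_2(r)\int_r^{\infty}t^{-\beta}\,d\big(U_1(t)\big)^{-1}<\infty$ and the stated integral condition. The only difference is that the paper disposes of this final equivalence by citing the proof of \cite[Theorem 4.1]{gm}, whereas you establish it directly via integration by parts (with the correct treatment of the boundary term $N^{-\beta}V(N)$) and the extremal-scale comparison of $U_2(r)$ with $r^{\beta}U_1(r)$; that self-contained argument is sound.
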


\begin{proof} 
	Applying Theorem \ref{lem5.5} with $v_1(y) \equiv 1 $, $v_2 (y) = |y|^{-\beta}$, $u_0(t) = t^{-\beta}$, $u_1 (t) = u_2(t) = u(t)$, taking into account that
	$$
	U_1 (r) = \sup_{r < t < \infty } t^{-\beta} \bigg( \sup_{0<s<t} u(s) \bigg), \qquad U_2 (r) = \sup_{0<t<r} u(t), \qquad r > 0,
	$$
	we have
	\begin{align*}
	I = \sup_{ r>0 } U_2(r) \bigg(   \big(U_1(r)\big)^{-1} r^{-\beta} + \int_r^{\infty}t^{-\beta} \,d \,\big( U_1(t)\big)^{-1}  \bigg).
	\end{align*}
	
	Since
	$$
	\sup_{ r>0 } U_2(r) \big(U_1(r)\big)^{-1} r^{-\beta} \le \sup_{ r>0 } U_1(r) \big(U_1(r)\big)^{-1} \le 1,
	$$
	inequality \eqref{eq.cor} holds if and only if
	$$
	\sup_{ r>0 } U_2(r) \bigg( \int_r^{\infty}t^{-\beta} \,d \,\big( U_1(t)\big)^{-1}  \bigg) < \infty.
	$$
	
	But the latter holds if and only if 
	$$
	\int_r^{\infty} t^{-\beta - 1} \,\big( U_1(t)\big)^{-1}  \,dt  \lesssim r^{-\beta} \,\big( U_1(r)\big)^{-1}, \quad r > 0
	$$
	(cf.  the proof of \cite[Theorem 4.1]{gm}).
	
	The proof is completed.
\end{proof}

\begin{cor}\label{lem5.5.0}
	Let $0<\beta <n$  and $u$ be a continuous weight function defined on
	$\rn\times(0,\infty)$. 
	If
	\begin{equation} \label{4.10}
	\sup_{x\in \rn}\sup_{0<s<\infty}s^{\b}\left(\sup_{s<y<\infty}s^{-\beta}\sup_{0<\tau<y}u(x,\tau)\right)\int_s^{\infty}t^{-\b-1}
	\left(\sup_{t<z<\infty}z^{-\beta}\sup_{0<\tau<z}u(x,\tau)\right)^{-1}dt<\infty,
	\end{equation}
	then the inequality
	\begin{align}\label{eq32485798576100}
	\sup_{x\in\rn}\sup_{ r>0 }u(x,r)\int_{\rn\backslash
		\Bxr}\frac{g(y)}{|x-y|^{\b}}dy
	% &\\\&\hspace{-1cm}
	\lesssim \sup_{x\in\rn}\sup_{r>0}u(x,r)
	\left(\sup_{t>r}t^{-\b}\int_{\Bxt} g(y)dy\right).
	\end{align}
	holds for any non-negative measurable functions $g$ on $\rn$.
\end{cor}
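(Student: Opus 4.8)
The plan is to freeze the outer variable $x\in\rn$ and reduce the two-parameter estimate \eqref{eq32485798576100} to the one-parameter inequality \eqref{eq.cor}, which has already been characterized in Corollary \ref{cor5.5}; the only point requiring care is that the constant produced there must be uniform in $x$. Concretely, since the supremum over $x$ appears on both sides of \eqref{eq32485798576100}, it suffices to prove, for each fixed $x$, the inequality
\begin{equation*}
\sup_{r>0}u(x,r)\int_{\rn\backslash \Bxr}\frac{g(y)}{|x-y|^{\b}}\,dy \le C_x \,\sup_{r>0}u(x,r) \left(\sup_{t>r}t^{-\b}\int_{\Bxt} g(y)\,dy\right)
\end{equation*}
with $\sup_{x\in\rn}C_x<\infty$; taking $\sup_x$ of both sides then yields \eqref{eq32485798576100}, the constant $\sup_x C_x$ passing through the outer supremum unchanged.

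Next, fixing $x$, I would set $h(z):=g(x+z)\ge0$ and $u_x(r):=u(x,r)$. The translation $y=x+z$ gives $\int_{\rn\backslash \Bxr}g(y)|x-y|^{-\b}\,dy=\int_{\rn\backslash \Br}h(z)|z|^{-\b}\,dz$ and $\int_{\Bxt}g(y)\,dy=\int_{\Bt}h(z)\,dz$, so the frozen inequality becomes \emph{exactly} \eqref{eq.cor} with $u$ replaced by $u_x$ and $g$ by $h$. By Corollary \ref{cor5.5} this holds for all $h\ge0$ if and only if
\begin{equation*}
\sup_{s>0}s^{\b}\,U_1^x(s)\int_s^{\infty}t^{-\b-1}\big(U_1^x(t)\big)^{-1}\,dt<\infty,\qquad U_1^x(s):=\sup_{s<y<\infty}y^{-\b}\sup_{0<\tau<y}u(x,\tau),
\end{equation*}
which is precisely the expression standing inside $\sup_{x\in\rn}$ in hypothesis \eqref{4.10}.

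Finally, to obtain the uniform bound on $C_x$ I would invoke the quantitative part $B\approx I$ of Theorem \ref{lem5.5}, from which Corollary \ref{cor5.5} is deduced, applied with $v_1\equiv1$, $v_2(y)=|y|^{-\b}$, $u_0(t)=t^{-\b}$ and $u_1=u_2=u_x$: this gives $C_x\approx I_x$, where $I_x$ is the displayed quantity above, with comparison constants depending only on $\b$ and not on $x$ or on $u$. Hypothesis \eqref{4.10} asserts exactly that $\sup_{x}I_x<\infty$, whence $\sup_x C_x\ls1$, and the reduction is complete. The main (indeed essentially the only) obstacle is this uniformity in $x$: because Corollary \ref{cor5.5} is phrased as a qualitative equivalence, one must trace the constant back to the estimate $B\approx I$ of Theorem \ref{lem5.5}, whose comparison constants are independent of the weights involved; the translation identities and the passage of the outer supremum through the inequality are otherwise routine.
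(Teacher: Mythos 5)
Your proposal is correct and follows essentially the same route as the paper: fix $x$, translate by $x$ so that the frozen inequality becomes exactly \eqref{eq.cor} for $u(x,\cdot)$ and $g(x+\cdot)$, apply Corollary \ref{cor5.5}, and observe that hypothesis \eqref{4.10} is precisely the supremum over $x$ of the resulting condition. Your extra care in tracing the constant back to the quantitative estimate $B\approx I$ of Theorem \ref{lem5.5} to justify uniformity in $x$ is a welcome elaboration of what the paper merely asserts ("with the constant independent of $x$"), but it is the same argument.
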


\begin{proof} 
	Assume that \eqref{4.10} holds. Fix any $x \in \rn$ and $g \in {\mathfrak M}^+(\rn)$. Applying Corollary \ref{cor5.5} to the function $g(x + \cdot)$, we get that the inequality
	\begin{equation*}
	\sup_{ r>0 }u(r)\int_{\rn\backslash	\Br} \frac{g(x+y)}{|y|^{\b}} \,dy \lesssim \sup_{r>0}u(r) \left(\sup_{t>r}t^{-\b}\int_{\Bt} g(x+y) \,dy\right)
	\end{equation*}
	holds with the constant independent of $x$. Hence the inequality	
	\begin{align*}
    \sup_{ r>0 }u(x,r)\int_{\rn\backslash \Bxr}\frac{g(y)}{|x-y|^{\b}}dy \lesssim \sup_{r>0}u(x,r)
	\left(\sup_{t>r}t^{-\b}\int_{\Bxt} g(y)dy\right).
	\end{align*}
	holds as well. Consequently, inequality \eqref{eq32485798576100} holds.
\end{proof}

%%%%%%%%%%%%%%%%%%%%%%%%%%%%%%%%%%%%%%%%%%%%%%%%%%%%%%%%%%%%%%%%%%%%%%%%%%%%%%%%%%%%%%%%%%%%%%%%%%%%%%%%%%%%%%%%%%%%%%%%%%%%%%%%%%%%%%%%%
%%%%%%%%%%%%%%%%%%%%%%%%%%%%%%%%%%%%%%%%%%%%%%%%%%%%%%%%%%%%%%%%%%%%%%%%%%%%%%%%%%%%%%%%%%%%%%%%%%%%%%%%%%%%%%%%%%%%%%%%%%%%%%%%%%%%%%

\

\section{Main results}\label{Sect6}

\

In this section, we extend the theorem of Muchenhoupt-Wheeden to generalized weighted Morrey spaces ${\mathcal M}_{p,\o}(\rn,v)$ and generalized weighted central Morrey space $\dot{\mathcal
	M}_{p,\o}(\rn,v)$. 

Our first main result reads as follows.
\begin{thm}\label{equiv}
	Let $1<p<\infty$, $0<\a<n$, $v\in A_{\infty}$ and $\o$ be a continuous
	weight function defined on $\rn\times(0,\infty)$.
	If
	\begin{equation*}
	\sup_{x \in \rn} \sup_{s > 0} s^{n - \a}\psi(x,s)\int_s^{\infty}t^{\a-n-1} \left(\psi(x,t)\right)^{-1}dt < \infty,
	\end{equation*}
	where
	$$
	\psi(x,t) : =\sup_{t<s<\infty}s^{\a-n}\sup_{0<\tau<s} \o(x,\tau)\, v(B(x,\tau))^{1/p},
	$$
	then there exists a constant $C$ such that for any 	$f\in\Llocp$
	\begin{equation*}
	C^{-1} \|M_{\a}f\|_{{\mathcal M}_{p,\o }(\rn,v)} \le \|I_{\a}f\|_{{\mathcal M}_{p,\o }(\rn,v)} \le C \|M_{\a}f\|_{{\mathcal M}_{p,\o }(\rn,v)}.
	\end{equation*}
\end{thm}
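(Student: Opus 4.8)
The plan is to prove the two inequalities separately, the left one being essentially free. Since the pointwise bound $M_\a f \le C\, I_\a f$ of \eqref{eq001} holds for every nonnegative locally integrable $f$, applying the norm $\|\cdot\|_{L_p(\Bxr,v)}$, multiplying by $\o(x,r)$ and taking the supremum over $x\in\rn$ and $r>0$ immediately yields $\|M_\a f\|_{{\mathcal M}_{p,\o}(\rn,v)} \le C \|I_\a f\|_{{\mathcal M}_{p,\o}(\rn,v)}$. Hence only the reverse inequality $\|I_\a f\|_{{\mathcal M}_{p,\o}(\rn,v)} \ls \|M_\a f\|_{{\mathcal M}_{p,\o}(\rn,v)}$ requires work.

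For this, I would first localize using the two-sided estimate of Theorem \ref{thm4.1}, which by Remark \ref{rem4.4} is valid for balls as well. Applied to $B = \Bxr$ it gives
$$
\|I_\a f\|_{L_p(\Bxr,v)} \ap \|M_\a f\|_{L_p(\Bxr,v)} + v(\Bxr)^{1/p}\int_{\rn\backslash \Bxr}\frac{f(y)\,dy}{|y-x|^{n-\a}}.
$$
Multiplying by $\o(x,r)$ and taking $\sup_{x,r}$ — using that the supremum of a sum of nonnegative quantities is comparable to the sum of the suprema — reduces the problem to estimating
$$
J := \sup_{x\in\rn}\sup_{r>0}\o(x,r)\,v(\Bxr)^{1/p}\int_{\rn\backslash \Bxr}\frac{f(y)\,dy}{|y-x|^{n-\a}}
$$
against $\|M_\a f\|_{{\mathcal M}_{p,\o}(\rn,v)}$, the first term being already exactly $\|M_\a f\|_{{\mathcal M}_{p,\o}(\rn,v)}$.

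The crucial step is to recognize that $J$ is precisely the left-hand side of the two-operator inequality \eqref{eq32485798576100} for the choice $\b = n-\a$ and $u(x,r) = \o(x,r)\,v(\Bxr)^{1/p}$ (note that this $u$ is continuous on $\rn\times(0,\infty)$, since $v(\Bxr)$ depends continuously on $(x,r)$). With this $u$, the quantity $\sup_{s<y<\infty}y^{\a-n}\sup_{0<\tau<y}u(x,\tau)$ equals the function $\psi(x,s)$ of the statement, so the hypothesis of the theorem coincides exactly with condition \eqref{4.10} of Corollary \ref{lem5.5.0}. Applying that corollary yields
$$
J \ls \sup_{x\in\rn}\sup_{r>0}\o(x,r)\,v(\Bxr)^{1/p}\left(\sup_{t>r}t^{\a-n}\int_{\Bxt}f(y)\,dy\right).
$$

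Finally I would bound the right-hand side by $\|M_\a f\|_{{\mathcal M}_{p,\o}(\rn,v)}$ via the lower estimate for the fractional maximal function over balls from Lemma \ref{lem4.35574943}: since $|\Bxt|^{\a/n-1}\ap t^{\a-n}$, that lemma gives
$$
v(\Bxr)^{1/p}\sup_{t>r}t^{\a-n}\int_{\Bxt}f(y)\,dy \ls \|M_\a f\|_{L_p(\Bxr,v)},
$$
so multiplying by $\o(x,r)$ and taking the supremum over $x$ and $r$ gives $J \ls \|M_\a f\|_{{\mathcal M}_{p,\o}(\rn,v)}$, which closes the reverse inequality. I expect the main (indeed essentially the only) substantive obstacle to be the verification that the abstract condition \eqref{4.10} of Corollary \ref{lem5.5.0} matches the hypothesis of the theorem under the substitution $u(x,r) = \o(x,r)\,v(\Bxr)^{1/p}$; once this identification is made, everything else is assembling the already-established local estimates.
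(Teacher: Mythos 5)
Your proposal is correct and follows essentially the same route as the paper: reduce to the upper bound via \eqref{eq001}, localize with Theorem \ref{thm4.1} (Remark \ref{rem4.4}), apply Corollary \ref{lem5.5.0} with $u(x,r)=\o(x,r)v(B(x,r))^{1/p}$ and $\b=n-\a$, and close with Lemma \ref{lem4.35574943}. The identification of the hypothesis with condition \eqref{4.10} is exactly the step the paper relies on as well, so there is nothing to add.
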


\begin{proof}
	In view of
	\eqref{eq001} we need only to prove
	\begin{equation*}
	\|I_{\a}f\|_{{\mathcal M}_{p,\o }(\rn,v)} \lesssim \|M_{\a}f\|_{{\mathcal M}_{p,\o }(\rn,v)}.
	\end{equation*}
	With regard to Theorem \ref{thm4.1} (see Remark \ref{rem4.4}) and
	Lemma \ref{lem4.35574943}, it will suffice to show that
	\begin{align*}
	\sup_{x\in \rn, \; r>0 }\o(x,r) v(B(x,r))^{{1} / {p}}\int_{\rn\backslash	B(x,r)}\frac{f(y)}{|y-x|^{n-\a}}dy & \\
	& \hspace{-5cm}\lesssim\sup_{x\in \rn, \; r>0}\o(x,r) v(B(x,r))^{{1} / {p}}\left(\sup_{t>r}\frac{1}{|B(x,t)|^{1-{\a} / {n}}}\int_{B(x,t)}f(y)dy\right). 
	\end{align*}
	The latter immediately follows from Corollary
	\ref{lem5.5.0}, applied with $u(x,r) = \o(x,r) v(B(x,r))^{ 1 / p}$ and $\beta = n - \alpha$.
\end{proof}

As a special case (when $v=1$), we obtain Theorem \ref{thmGM}.

%%%%%%%%%%%%%%%%%%%%%%%%%%%%%%%%%%%%%%%%%%%%%%%%%%%%%%%%%%%%%%%%%%%%%%%%%%%%%%%%%%%%%%%%%%%%

Now we restrict our attention to the generalized weighted central Morrey spaces. In this case we are able to get criteria for the equivalency of norms of the Riesz potential and the fractional
maximal function.
\begin{thm}\label{thm.main.1}
	Let $1<p<\infty$, $0<\a<n$, $v\in A_{\infty}$, $\o$ be a continuous weight
	function defined on $(0,\infty)$. Then for any $f\in\Llocp$
	\begin{equation*}
	\|I_{\a}f\|_{\dot{\mathcal M}_{p,\o }(\rn,v)} \thickapprox
	\|M_{\a}f\|_{\dot{\mathcal M}_{p,\o }(\rn,v)}
	\end{equation*}
    if and only if
	\begin{align}\label{eq.two-op.1}
	\sup_{r>0 }	\omega (r) w(B(0,r))^{{1} / {p}} \int_{\rn\backslash \Br} |y|^{\a - n} f(y)\,dy \lesssim \sup_{r>0} \o(r) \|M_{\a}f\|_{L_{p,w}(\Br)}.
	\end{align}
\end{thm}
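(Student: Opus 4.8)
The plan is to obtain the assertion directly from the local two-sided estimate of Theorem~\ref{thm4.1}, specialized to balls centered at the origin (legitimate by Remark~\ref{rem4.4}), by taking a weighted supremum. First I would record that, for every $r>0$,
\begin{equation*}
\|I_{\a}f\|_{L_{p}(\Br,v)} \thickapprox \|M_{\a}f\|_{L_{p}(\Br,v)} + v(\Br)^{1/p}\int_{\rn\backslash \Br}\frac{f(y)\,dy}{|y|^{n-\a}},
\end{equation*}
with constants independent of $r$ and $f$; this is exactly where the hypothesis $v\in A_{\infty}$ enters. Multiplying by $\o(r)$, taking $\sup_{r>0}$, and invoking the elementary two-sided bound $\sup_r\big(g(r)+h(r)\big)\thickapprox \sup_r g(r)+\sup_r h(r)$ for non-negative $g,h$ (valid with the absolute constants $1$ and $2$), I would arrive at
\begin{equation*}
\|I_{\a}f\|_{\dot{\mathcal M}_{p,\o}(\rn,v)} \thickapprox \|M_{\a}f\|_{\dot{\mathcal M}_{p,\o}(\rn,v)} + \sup_{r>0}\o(r)\,v(\Br)^{1/p}\int_{\rn\backslash \Br}\frac{f(y)\,dy}{|y|^{n-\a}}.
\end{equation*}

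Write $A:=\|M_{\a}f\|_{\dot{\mathcal M}_{p,\o}(\rn,v)}$ and let $B$ denote the last term above; reading $w=v$ in~\eqref{eq.two-op.1}, the condition~\eqref{eq.two-op.1} says precisely that $B\ls A$ holds uniformly in $f$, since $\|M_{\a}f\|_{L_{p,w}(\Br)}=\|M_{\a}f\|_{L_{p}(\Br,v)}$ and $\sup_{r>0}\o(r)\|M_{\a}f\|_{L_{p}(\Br,v)}=A$. The remaining ingredient is the automatic inequality $A\ls \|I_{\a}f\|_{\dot{\mathcal M}_{p,\o}(\rn,v)}$, which follows at once from the pointwise bound~\eqref{eq001}. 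Consequently the equivalence $\|I_{\a}f\|_{\dot{\mathcal M}_{p,\o}(\rn,v)}\thickapprox\|M_{\a}f\|_{\dot{\mathcal M}_{p,\o}(\rn,v)}$ reduces to the single inequality $\|I_{\a}f\|_{\dot{\mathcal M}_{p,\o}(\rn,v)}\ls A$. For the \emph{if} direction, assuming~\eqref{eq.two-op.1} gives $B\ls A$, so that $\|I_{\a}f\|_{\dot{\mathcal M}_{p,\o}(\rn,v)}\thickapprox A+B\thickapprox A$. For the \emph{only if} direction, if $\|I_{\a}f\|_{\dot{\mathcal M}_{p,\o}(\rn,v)}\ls A$ holds uniformly, then $A+B\thickapprox\|I_{\a}f\|_{\dot{\mathcal M}_{p,\o}(\rn,v)}\ls A$ forces $B\ls A$, which is~\eqref{eq.two-op.1}.

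I do not expect a deep obstacle: once Theorem~\ref{thm4.1} is available the argument is essentially bookkeeping. The one point needing care is uniformity of constants, namely that the universally-quantified equivalence over all $f$ must be matched with the single uniform estimate~\eqref{eq.two-op.1}; this is legitimate precisely because the constants in Theorem~\ref{thm4.1} are independent of $f$ and of the ball, while the supremum is subadditive with absolute constants. I would also stress explicitly that only the direction $\|I_{\a}f\|_{\dot{\mathcal M}_{p,\o}(\rn,v)}\ls\|M_{\a}f\|_{\dot{\mathcal M}_{p,\o}(\rn,v)}$ is nontrivial, the reverse being supplied for free by~\eqref{eq001}.
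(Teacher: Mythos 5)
Your argument is correct and follows essentially the same route as the paper: both rest on the two-sided local estimate of Theorem~\ref{thm4.1} (applied to balls $B(0,r)$ via Remark~\ref{rem4.4}) together with the pointwise bound~\eqref{eq001}, with the ``only if'' direction extracted from the lower bound $v(\Br)^{1/p}\int_{\rn\backslash \Br}|y|^{\a-n}f(y)\,dy \ls \|I_\a f\|_{L_p(\Br,v)}$. Your packaging via $\sup_r(g+h)\thickapprox\sup_r g+\sup_r h$ and the identification $w=v$ in~\eqref{eq.two-op.1} (a typo in the statement) are both fine.
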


\begin{proof}
	By inequality \eqref{eq001} and Theorem \ref{thm4.1}, we have that
	$$
	\|M_{\a}f\|_{L_{p}(\Br,v)} \lesssim \|I_{\a}f\|_{L_{p}(\Br,v)} \thickapprox \|M_{\a}f\|_{L_{p}(\Br,v)} + v(\Br)^{{1} / {p}} \int_{\rn\backslash \Br} |y|^{\a - n} f(y)\,dy, \quad r > 0.
	$$
	Thus, if inequality \eqref{eq.two-op.1} holds, then inequalities
	\begin{align*}
	\|M_{\a}f\|_{\dot{\mathcal M}_{p,\o }(\rn,v)} = \sup_{r > 0} \omega (r) \|M_{\a}f\|_{L_{p}(\Br,v)} \lesssim \sup_{r > 0} \omega (r) \|I_{\a}f\|_{L_{p}(\Br,v)} = \|I_{\a}f\|_{\dot{\mathcal M}_{p,\o }(\rn,v)},
	\end{align*}
	and 
	\begin{align*}
	\|I_{\a}f\|_{\dot{\mathcal M}_{p,\o }(\rn,v)} & = \sup_{r > 0} \omega (r) \|I_{\a}f\|_{L_{p}(\Br,v)} \\
	& \lesssim \sup_{r > 0} \omega (r) \|M_{\a}f\|_{L_{p}(\Br,v)} + \sup_{r > 0} \omega (r) v(\Br)^{{1} / {p}} \int_{\rn\backslash \Br} |y|^{\a - n} f(y)\,dy \\
	& \lesssim \sup_{r > 0} \omega (r) \|M_{\a}f\|_{L_{p}(\Br,v)} = \|M_{\a}f\|_{\dot{\mathcal M}_{p,\o }(\rn,v)}
	\end{align*}
	hold.
	
	Now assume that
	$$
	\|I_{\a}f\|_{\dot{\mathcal M}_{p,\o }(\rn,v)} \thickapprox
	\|M_{\a}f\|_{\dot{\mathcal M}_{p,\o }(\rn,v)}
	$$
	holds. By Theorem \ref{thm4.1}, we know that
	$$
	v(B(0,r))^{{1} / {p}} \int_{\rn\backslash \Br} |y|^{\a - n} f(y)\,dy \lesssim \|I_{\a}f\|_{L_{p}(\Br,v)}.
	$$
	Hence, we have that
	$$
	\sup_{r>0 }	\omega (r) v(B(0,r))^{{1} / {p}} \int_{\rn\backslash \Br} |y|^{\a - n} f(y)\,dy \lesssim \sup_{r>0} \o(r) \|I_{\a}f\|_{L_{p}(\Br,v)} = \|I_{\a}f\|_{\dot{\mathcal M}_{p,\o }(\rn,v)}.
	$$
	Consequently, the inequality
	$$
	\sup_{r>0 }	\omega (r) v(B(0,r))^{{1} / {p}} \int_{\rn\backslash \Br} |y|^{\a - n} f(y)\,dy \lesssim 
	\|M_{\a}f\|_{\dot{\mathcal M}_{p,\o }(\rn,v)}
	$$
	holds as well.
	
	The proof is completed.
\end{proof}

\begin{thm}\label{main_2}
	Let $1 < p < q < \infty$, $0<\a<n$, $v\in A_{\infty} \cap RD_{q(1-\a/n)}$, $\o$ be a continuous weight
	function defined on $(0,\infty)$. Then for any $f\in\Llocp$
	\begin{equation*}
	\|I_{\a}f\|_{\dot{\mathcal M}_{p,\o }(\rn,v)} \thickapprox
	\|M_{\a}f\|_{\dot{\mathcal M}_{p,\o }(\rn,v)}
	\end{equation*}
	if and only if
	\begin{align*}
	\sup_{ r>0 } \omega (r) v(B(0,r))^{{1} / {p}} \int_{\rn\backslash	\Bxr} |y|^{\alpha - n} f(y) \, dy \lesssim \sup_{r>0} \omega (r) v(B(0,r))^{{1} / {p}} \left(\sup_{t>r}t^{\alpha - n}\int_{\Bt} f(y)dy\right).
	\end{align*}
\end{thm}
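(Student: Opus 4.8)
The plan is to reduce the equivalence of norms to the two-operator inequality by applying the theory already developed for the fractional maximal function under the reverse-doubling hypothesis. The key observation is that, since $v \in RD_{q(1-\a/n)}$ with $1 < p < q < \infty$, Theorem~\ref{thm4.7} gives the two-sided estimate
\begin{equation*}
\|M_{\a}f\|_{L_{p}(\Br,v)} \thickapprox v(\Br)^{{1} / {p}}\left(\sup_{t>r}|B(0,t)|^{\a/n-1}\int_{B(0,t)}f(y)dy\right), \quad r > 0.
\end{equation*}
Multiplying by $\o(r)$ and taking the supremum over $r > 0$, this transforms the right-hand side of Theorem~\ref{thm.main.1}'s criterion \eqref{eq.two-op.1} into the quantity appearing on the right-hand side of the displayed inequality in the present statement, up to a fixed constant. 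Thus the reverse-doubling hypothesis lets us replace $\|M_{\a}f\|_{L_{p,w}(\Br)}$ by the explicit supremal average.

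First I would invoke Theorem~\ref{thm.main.1}, which already characterizes the equivalence $\|I_{\a}f\|_{\dot{\mathcal M}_{p,\o}(\rn,v)} \thickapprox \|M_{\a}f\|_{\dot{\mathcal M}_{p,\o}(\rn,v)}$ as being equivalent to inequality \eqref{eq.two-op.1}. Since both hypotheses $v \in A_{\infty}$ and $1 < p < \infty$ are satisfied, that theorem applies verbatim. It then remains only to show that condition \eqref{eq.two-op.1} is, under the additional assumption $v \in RD_{q(1-\a/n)}$, equivalent to the condition stated here. This is where the displayed equivalence from Theorem~\ref{thm4.7} enters: I would substitute that equivalence into the right-hand side of \eqref{eq.two-op.1}, obtaining
\begin{equation*}
\sup_{r>0} \o(r) \|M_{\a}f\|_{L_{p,w}(\Br)} \thickapprox \sup_{r>0} \o(r) v(\Br)^{{1} / {p}} \left(\sup_{t>r} t^{\a-n} \int_{\Bt} f(y)\,dy\right),
\end{equation*}
where I absorb the fixed dimensional constant $|B(0,t)|^{\a/n-1} \approx t^{\a-n}$ into the equivalence. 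Replacing the right-hand side of \eqref{eq.two-op.1} by this equivalent expression yields precisely the inequality displayed in the present theorem, with the weight $v$ playing the role of $w$.

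The main subtlety is bookkeeping rather than conceptual: one must verify that the equivalence constants supplied by Theorem~\ref{thm4.7} are genuinely independent of the ball $B(0,r)$ and of $f$, so that taking the supremum over $r$ preserves the two-sided bound; this is guaranteed by the statement of Theorem~\ref{thm4.7}. I would also note the harmless identification of the notations $w$ and $v$ inherited from Theorem~\ref{thm.main.1}, which I read as referring to the same weight. With these points checked, the chain of equivalences---equivalence of norms $\Leftrightarrow$ \eqref{eq.two-op.1} $\Leftrightarrow$ the stated inequality---closes the proof, and no genuinely hard estimate beyond those already established in Sections~\ref{FrMax} and \ref{Sect6} is required.
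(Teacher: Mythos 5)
Your proposal is correct and follows essentially the same route as the paper: the paper likewise reduces to Theorem \ref{thm.main.1} and then converts \eqref{eq.two-op.1} into the stated condition, only it applies the two one-sided estimates (Lemma \ref{lem4.35574943} for the lower bound in one direction, Lemma \ref{lem.RD} for the upper bound in the other) separately rather than quoting the combined two-sided Theorem \ref{thm4.7} as you do. The substance is identical.
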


\begin{proof}
	Assume that the inequality 
	$$
	\sup_{ r>0 } \omega (r) v(B(0,r))^{{1} / {p}} \int_{\rn\backslash	\Bxr} |y|^{\alpha - n} f(y) \, dy \lesssim \sup_{r>0} \omega (r) v(B(0,r))^{{1} / {p}} \left(\sup_{t>r}t^{\alpha - n}\int_{\Bt} f(y)dy\right)
	$$
	holds. Then, by Lemma \ref{lem4.35574943}, the inequality
	$$
	\sup_{ r>0 } \omega (r) v(B(0,r))^{{1} / {p}} \int_{\rn\backslash	\Bxr} |y|^{\alpha - n} f(y) \, dy \lesssim \sup_{r > 0} \omega (r) \|M_{\a}f\|_{L_{p}(\Br,v)} = \|M_{\a}f\|_{\dot{\mathcal M}_{p,\o }(\rn,v)}
	$$ 
	holds as well. Therefore, by Theorem \ref{thm.main.1}, we have that
	$$
	\|I_{\a}f\|_{\dot{\mathcal M}_{p,\o }(\rn,v)} \thickapprox \|M_{\a}f\|_{\dot{\mathcal M}_{p,\o }(\rn,v)}.
	$$
	
	Now assume that
	$$
	\|I_{\a}f\|_{\dot{\mathcal M}_{p,\o }(\rn,v)} \thickapprox \|M_{\a}f\|_{\dot{\mathcal M}_{p,\o }(\rn,v)}
	$$
	holds. Then, by Theorem \ref{thm.main.1}, the inequality
	$$
	\sup_{ r>0 } \omega (r) v(B(0,r))^{{1} / {p}} \int_{\rn\backslash	\Bxr} |y|^{\alpha - n} f(y) \, dy \lesssim \|M_{\a}f\|_{\dot{\mathcal M}_{p,\o }(\rn,v)}
	$$ 
	holds. Hence, by Lemma \ref{lem.RD}, the inequality
	$$
	\sup_{ r>0 } \omega (r) v(B(0,r))^{{1} / {p}} \int_{\rn\backslash	\Bxr} |y|^{\alpha - n} f(y) \, dy \lesssim \sup_{r>0} \omega (r) v(B(0,r))^{{1} / {p}} \left(\sup_{t>r}t^{\alpha - n}\int_{\Bt} f(y)dy\right)
	$$
	holds as well.
	
	The proof is completed.
\end{proof}

Our second main result is formulated as follows.
\begin{thm}\label{equiv_2}
	Let $1 < p < q < \infty$, $0<\a<n$, $v\in A_{\infty} \cap RD_{q(1-\a/n)}$, $\o$ be a continuous weight
	function defined on $(0,\infty)$. Then
	\begin{equation*}
	\|I_{\a}f\|_{\dot{\mathcal M}_{p,\o }(\rn,v)} \thickapprox
	\|M_{\a}f\|_{\dot{\mathcal M}_{p,\o }(\rn,v)}
	\end{equation*}
	if and only if 
	\begin{equation*}
	\sup_{r > 0} r^{n - \alpha} \,\bigg( \sup_{r < t < \infty } t^{\alpha - n} \bigg( \sup_{0<s<t} u(s) \bigg)\bigg) \int_r^{\infty} t^{\alpha - n - 1} \,\bigg( \sup_{t < \tau < \infty } \tau^{\alpha - n} \bigg( \sup_{0<s<\tau} u(s) \bigg)\bigg)^{-1}  \,dt  < \infty.	
	\end{equation*}
\end{thm}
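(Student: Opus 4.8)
The plan is to derive the statement by chaining the two preceding results, Theorem~\ref{main_2} and Corollary~\ref{cor5.5}, after identifying the relevant two-operator inequality with the one characterized in Corollary~\ref{cor5.5}. Throughout I would write $u(r) := \o(r)\, v(\Br)^{1/p}$; since $\o$ is a continuous weight on $\I$ and the map $r \mapsto v(\Br)$ is continuous and positive, $u$ is itself a continuous weight on $\I$, so that Corollary~\ref{cor5.5} is applicable to it. This choice of $u$ is exactly the one appearing (implicitly) in the condition of the statement.

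First I would invoke Theorem~\ref{main_2}: under the present hypotheses $1<p<q<\infty$, $0<\a<n$ and $v \in A_\infty \cap RD_{q(1-\a/n)}$, the norm equivalence $\|I_\a f\|_{\dot{\mathcal M}_{p,\o}(\rn,v)} \approx \|M_\a f\|_{\dot{\mathcal M}_{p,\o}(\rn,v)}$ (holding for every $f \in \Llocp$) is equivalent to the validity of the two-operator weighted inequality
\begin{equation*}
\sup_{r>0} u(r) \int_{\rn \setminus \Br} |y|^{\a-n} f(y)\,dy \lesssim \sup_{r>0} u(r)\left(\sup_{t>r} t^{\a-n}\int_{\Bt} f(y)\,dy\right).
\end{equation*}
This reduces the problem to characterizing, in terms of $\o$ and $v$ alone, when the above inequality holds.

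Next I would set $\b := n - \a \in \I$ and note that, rewriting $|y|^{\a-n} = |y|^{-\b}$ and $t^{\a-n} = t^{-\b}$, the displayed inequality is precisely inequality \eqref{eq.cor} of Corollary~\ref{cor5.5} for the weight $u$. Hence, by Corollary~\ref{cor5.5}, it holds if and only if
\begin{equation*}
\sup_{r>0} r^{\b}\left(\sup_{r<t<\infty} t^{-\b}\Bigl(\sup_{0<s<t} u(s)\Bigr)\right)\int_r^{\infty} t^{-\b-1}\left(\sup_{t<\tau<\infty}\tau^{-\b}\Bigl(\sup_{0<s<\tau} u(s)\Bigr)\right)^{-1}dt < \infty.
\end{equation*}
Substituting $\b = n-\a$ turns this into exactly the condition displayed in the statement, and combining the two equivalences finishes the argument.

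The one point requiring care is the alignment of the admissible classes of test functions: Corollary~\ref{cor5.5} characterizes \eqref{eq.cor} over all non-negative measurable functions, whereas the two-operator inequality produced by Theorem~\ref{main_2} is quantified over $f \in \Llocp$. The sufficiency direction is immediate, since \eqref{eq.cor} for all non-negative measurable functions restricts to $\Llocp$. For the necessity direction one only needs that the (near-)extremal functions in the necessity part of Corollary~\ref{cor5.5} may be chosen within $\Llocp$ --- e.g.\ compactly supported --- so that finiteness of the condition is already forced by the inequality on $\Llocp$ alone; this is where I expect the only, and entirely routine, technical verification to lie.
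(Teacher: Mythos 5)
Your proposal is correct and follows essentially the same route as the paper, which likewise deduces Theorem~\ref{equiv_2} by combining Theorem~\ref{main_2} with Corollary~\ref{cor5.5} applied to $u(r)=\o(r)\,v(B(0,r))^{1/p}$ and $\beta=n-\a$. Your extra remark about aligning the test-function classes is a point the paper passes over in silence, but it does not change the argument.
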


\begin{proof}
The statement follows by Theorem \ref{main_2} and Corollary \ref{cor5.5}, applied with $u(r) = \o(r) v(B(0,r))^{ 1 / p}$ and $\beta = n - \alpha$.
\end{proof}	

%%%%%%%%%%%%%%%%%%%%%%%%%%%%%%%%%%%%%%%%%%%%%%%%%%%%%%%%%%%%%%%%%%%%%%%%%%%%%%%%%%%%%%%%%%%%%%%%%%%%%%%%%%%%%%%%%

%%%%%%%%%%%%%%%%%%%%%%%%%%%%%%%%%%%%%%%%%%%%%%%%%%%%%%%%%%%%%%%%%%%%%%%%%%%%%%%%%%%%%%%%%%%%%%%%%%%%%%%%%%%%%%%%%%%%%%%%%%%%%%%%%%%%%%

%%%%%%%%%%%%%%%%%%%%%%%%%%%%%%%%%%%%%%%%%%%%%%%%%%%%%%%%%%%%%%%%%%%%%%%%%%%%%%%%%%%%%%%%%%%%%%%%%%%%%%%%%%%%%%%%

\begin{bibdiv}
\begin{biblist}

\bib{Adams}{article}{
	author={Adams, D.R.},
	title={A note on Riesz potentials},
	journal={Duke Math. J.},
	volume={42},
	date={1975},
	number={4},
	pages={765--778},
	issn={0012-7094},
	review={\MR{0458158}},
}

\bib{Adams1}{article}{
	author={Adams, D.R.},
	title={Choquet integrals in potential theory},
	journal={Publ. Mat.},
	volume={42},
	date={1998},
	number={1},
	pages={3--66},
	issn={0214-1493},
	review={\MR{1628134}},
}

\bib{AdXi}{article}{
	author={Adams, D.R.},
	author={Xiao, J.},
	title={Nonlinear potential analysis on Morrey spaces and their
		capacities},
	journal={Indiana Univ. Math. J.},
	volume={53},
	date={2004},
	number={6},
	pages={1629--1663},
	issn={0022-2518},
	review={\MR{2106339}},
	doi={10.1512/iumj.2004.53.2470},
}

\bib{AlLakGuz_2000}{article}{
	author={Alvarez, J.},
	author={Lakey, J.},
	author={Guzm\'an-Partida, M.},
	title={Spaces of bounded $\lambda$-central mean oscillation, Morrey
		spaces, and $\lambda$-central Carleson measures},
	journal={Collect. Math.},
	volume={51},
	date={2000},
	number={1},
	pages={1--47},
	issn={0010-0757},
	review={\MR{1757848}},
}

\bib{BS}{book}{
	author={Bennett, C.},
	author={Sharpley, R.},
	title={Interpolation of operators},
	series={Pure and Applied Mathematics},
	volume={129},
	publisher={Academic Press, Inc., Boston, MA},
	date={1988},
	pages={xiv+469},
	isbn={0-12-088730-4},
	review={\MR{928802}},
}

\bib{Caf}{article}{
	author={Caffarelli, L.},
	title={Elliptic second order equations},
	journal={Rend. Sem. Mat. Fis. Milano},
	volume={58},
	date={1988},
	pages={253--284 (1990)},
	issn={0370-7377},
	review={\MR{1069735}},
	doi={10.1007/BF02925245},
}

\bib{FanLuYang}{article}{
	author={Fan, D.},
	author={Lu, S.},
	author={Yang, D.},
	title={Regularity in Morrey spaces of strong solutions to nondivergence
		elliptic equations with VMO coefficients},
	journal={Georgian Math. J.},
	volume={5},
	date={1998},
	number={5},
	pages={425--440},
	issn={1072-947X},
	review={\MR{1643604}},
	doi={10.1023/B:GEOR.0000008114.52420.af},
}

\bib{FPR}{article}{
	author={Di Fazio, G.},
	author={Palagachev, D.K.},
	author={Ragusa, M.A.},
	title={Global Morrey regularity of strong solutions to the Dirichlet
		problem for elliptic equations with discontinuous coefficients},
	journal={J. Funct. Anal.},
	volume={166},
	date={1999},
	number={2},
	pages={179--196},
	issn={0022-1236},
	review={\MR{1707751}},
	doi={10.1006/jfan.1999.3425},
}

\bib{fefstein_1972}{article}{
	author={Fefferman, C.},
	author={Stein, E.M.},
	title={$H^{p}$ spaces of several variables},
	journal={Acta Math.},
	volume={129},
	date={1972},
	number={3-4},
	pages={137--193},
	issn={0001-5962},
	review={\MR{0447953}},
	doi={10.1007/BF02392215},
}

\bib{GR}{book}{
	author={Garcia-Cuerva, J.},
	author={Rubio de Francia, J.L.},
	title={Weighted norm inequalities and related topics},
	series={North-Holland Mathematics Studies},
	volume={116},
	note={Notas de Matem\'atica [Mathematical Notes], 104},
	publisher={North-Holland Publishing Co., Amsterdam},
	date={1985},
	pages={x+604},
	isbn={0-444-87804-1},
	review={\MR{807149}},
}

\bib{GarciaHer_1994}{article}{
	author={Garc\'\i a-Cuerva, J.L.},
	author={Herrero, M.L.},
	title={A theory of Hardy spaces associated to the Herz spaces},
	journal={Proc. London Math. Soc. (3)},
	volume={69},
	date={1994},
	number={3},
	pages={605--628},
	issn={0024-6115},
	review={\MR{1289865}},
	doi={10.1112/plms/s3-69.3.605},
}

\bib{gm1}{article}{
	author={Gogatishvili, A.},
	author={Mustafayev, R. Ch.},
	title={The multidimensional reverse Hardy inequalities},
	journal={Math. Inequal. Appl.},
	volume={15},
	date={2012},
	number={1},
	pages={1--14},
	issn={1331-4343},
	review={\MR{2919426}},
	doi={10.7153/mia-15-01},
}

\bib{gm}{article}{
	author={Gogatishvili, A.},
	author={Mustafayev, R.},
	title={Equivalence of norms of Riesz potential and fractional maximal
		function in generalized Morrey spaces},
	journal={Collect. Math.},
	volume={63},
	date={2012},
	number={1},
	pages={11--28},
	issn={0010-0757},
	review={\MR{2887108}},
	doi={10.1007/s13348-010-0012-x},
}

\bib{gmu_CMJ}{article}{
	author={Gogatishvili, A.},
	author={Mustafayev, R.},
	author={\"Unver, T.},
	title={Embeddings between weighted Copson and Ces\`aro function spaces},
	journal={Czechoslovak Math. J.},
	volume={67(142)},
	date={2017},
	number={4},
	pages={1105--1132},
	issn={0011-4642},
	review={\MR{3736022}},
	doi={10.21136/CMJ.2017.0424-16},
}

\bib{gmu_2017}{article}{
	author={Gogatishvili, A.},
	author={Mustafayev, R. Ch.},
	author={\"Unver, T.},
	title={Embedding relations between weighted complementary local
		Morrey-type spaces and weighted local Morrey-type spaces},
	journal={Eurasian Math. J.},
	volume={8},
	date={2017},
	number={1},
	pages={34--49},
	issn={2077-9879},
	review={\MR{3663345}},
}

\bib{gp2}{article}{
	author={Gogatishvili, A.},
	author={Pick, L.},
	title={Embeddings and duality theorems for weak classical Lorentz spaces},
	journal={Canad. Math. Bull.},
	volume={49},
	date={2006},
	number={1},
	pages={82--95},
	issn={0008-4395},
	review={\MR{2198721}},
	doi={10.4153/CMB-2006-008-3},
}

\bib{graf}{book}{
	author={Grafakos, L.},
	title={Modern Fourier analysis},
	series={Graduate Texts in Mathematics},
	volume={250},
	edition={2},
	publisher={Springer, New York},
	date={2009},
	pages={xvi+504},
	isbn={978-0-387-09433-5},
	review={\MR{2463316}},
	doi={10.1007/978-0-387-09434-2},
}

\bib{GKMS}{article}{
	author={Guliyev, V.S.},
	author={Karaman, T.},
	author={Mustafayev, R.Ch.},
	author={\c Serbet\c{c}i, A.},
	title={Commutators of sublinear operators generated by Calder\'on-Zygmund
		operator on generalized weighted Morrey spaces},
	journal={Czechoslovak Math. J.},
	volume={64(139)},
	date={2014},
	number={2},
	pages={365--386},
	issn={0011-4642},
	review={\MR{3277742}},
	doi={10.1007/s10587-014-0107-8},
}

\bib{jawtor_1985}{article}{
	author={Jawerth, B.},
	author={Torchinsky, A.},
	title={Local sharp maximal functions},
	journal={J. Approx. Theory},
	volume={43},
	date={1985},
	number={3},
	pages={231--270},
	issn={0021-9045},
	review={\MR{779906}},
	doi={10.1016/0021-9045(85)90102-9},
}

\bib{johnNir_1961}{article}{
	author={John, F.},
	author={Nirenberg, L.},
	title={On functions of bounded mean oscillation},
	journal={Comm. Pure Appl. Math.},
	volume={14},
	date={1961},
	pages={415--426},
	issn={0010-3640},
	review={\MR{0131498}},
	doi={10.1002/cpa.3160140317},
}

\bib{john_1965}{article}{
	author={John, F.},
	title={Quasi-isometric mappings},
	conference={
		title={Seminari 1962/63 Anal. Alg. Geom. e Topol., vol. 2, Ist. Naz.
			Alta Mat.},
	},
	book={
		publisher={Ediz. Cremonese, Rome},
	},
	date={1965},
	pages={462--473},
	review={\MR{0190905}},
}

\bib{KomShi}{article}{
	author={Komori, Y.},
	author={Shirai, S.},
	title={Weighted Morrey spaces and a singular integral operator},
	journal={Math. Nachr.},
	volume={282},
	date={2009},
	number={2},
	pages={219--231},
	issn={0025-584X},
	review={\MR{2493512}},
	doi={10.1002/mana.200610733},
}

\bib{Maz}{article}{
	author={Mazzucato, A.L.},
	title={Besov-Morrey spaces: function space theory and applications to
		non-linear PDE},
	journal={Trans. Amer. Math. Soc.},
	volume={355},
	date={2003},
	number={4},
	pages={1297--1364},
	issn={0002-9947},
	review={\MR{1946395}},
	doi={10.1090/S0002-9947-02-03214-2},
}

\bib{Miz_1990}{article}{
	author={Mizuhara, T.},
	title={Boundedness of some classical operators on generalized Morrey
		spaces},
	conference={
		title={Harmonic analysis},
		address={Sendai},
		date={1990},
	},
	book={
		series={ICM-90 Satell. Conf. Proc.},
		publisher={Springer, Tokyo},
	},
	date={1991},
	pages={183--189},
	review={\MR{1261439}},
}

\bib{Morrey}{article}{
	author={Morrey, C.B., Jr.},
	title={On the solutions of quasi-linear elliptic partial differential
		equations},
	journal={Trans. Amer. Math. Soc.},
	volume={43},
	date={1938},
	number={1},
	pages={126--166},
	issn={0002-9947},
	review={\MR{1501936}},
	doi={10.2307/1989904},
}

\bib{Muck}{article}{
	author={Muckenhoupt, B.},
	title={Weighted norm inequalities for the Hardy maximal function},
	journal={Trans. Amer. Math. Soc.},
	volume={165},
	date={1972},
	pages={207--226},
	issn={0002-9947},
	review={\MR{0293384}},
	doi={10.2307/1995882},
}

\bib{MuckWheeden}{article}{
	author={Muckenhoupt, B.},
	author={Wheeden, R.},
	title={Weighted norm inequalities for fractional integrals},
	journal={Trans. Amer. Math. Soc.},
	volume={192},
	date={1974},
	pages={261--274},
	issn={0002-9947},
	review={\MR{0340523}},
	doi={10.2307/1996833},
}

\bib{Mus1}{article}{
	author={Mustafayev, R.Ch.},
	title={On boundedness of sublinear operators in weighted Morrey spaces},
	journal={Azerb. J. Math.},
	volume={2},
	date={2012},
	number={1},
	pages={66--79},
	issn={2218-6816},
	review={\MR{2967285}},
}

\bib{MU_2015}{article}{
	author={Mustafayev, R.Ch.},
	author={\"Unver, T.},
	title={Embeddings between weighted local Morrey-type spaces and weighted
		Lebesgue spaces},
	journal={J. Math. Inequal.},
	volume={9},
	date={2015},
	number={1},
	pages={277--296},
	issn={1846-579X},
	review={\MR{3333923}},
	doi={10.7153/jmi-09-24},
}

\bib{Nakai}{article}{
	author={Nakai, E.},
	title={Hardy-Littlewood maximal operator, singular integral operators and
		the Riesz potentials on generalized Morrey spaces},
	journal={Math. Nachr.},
	volume={166},
	date={1994},
	pages={95--103},
	issn={0025-584X},
	review={\MR{1273325}},
	doi={10.1002/mana.19941660108},
}

\bib{Nakamura_2016}{article}{
	author={Nakamura, S.},
	title={Generalized weighted Morrey spaces and classical operators},
	journal={Math. Nachr.},
	volume={289},
	date={2016},
	number={17-18},
	pages={2235--2262},
	issn={0025-584X},
	review={\MR{3583267}},
	doi={10.1002/mana.201500260},
}

\bib{NakamuraSaw_2017}{article}{
	author={Nakamura, S.},
	author={Sawano, Y.},
	title={The singular integral operator and its commutator on weighted
		Morrey spaces},
	journal={Collect. Math.},
	volume={68},
	date={2017},
	number={2},
	pages={145--174},
	issn={0010-0757},
	review={\MR{3633056}},
	doi={10.1007/s13348-017-0193-7},
}

\bib{Perez}{article}{
	author={P\'erez, C.},
	title={Two weighted norm inequalities for Riesz potentials and uniform
		$L^p$-weighted Sobolev inequalities},
	journal={Indiana Univ. Math. J.},
	volume={39},
	date={1990},
	number={1},
	pages={31--44},
	issn={0022-2518},
	review={\MR{1052009}},
	doi={10.1512/iumj.1990.39.39004},
}

\bib{PerSam}{article}{
	author={Persson, L.-E.},
	author={Samko, N.},
	title={Weighted Hardy and potential operators in the generalized Morrey
		spaces},
	journal={J. Math. Anal. Appl.},
	volume={377},
	date={2011},
	number={2},
	pages={792--806},
	issn={0022-247X},
	review={\MR{2769175}},
	doi={10.1016/j.jmaa.2010.11.029},
}

\bib{PersSamWall_2016}{article}{
	author={Persson, L.-E.},
	author={Samko, N.},
	author={Wall, P.},
	title={Calder\'on-Zygmund type singular operators in weighted generalized Morrey spaces},
	journal={J. Fourier Anal. Appl.},
	volume={22},
	date={2016},
	number={2},
	pages={413--426},
	issn={1069-5869},
	review={\MR{3471305}},
	doi={10.1007/s00041-015-9418-x},
}

\bib{RV1}{article}{
	author={Ruiz, A.},
	author={Vega, L.},
	title={Unique continuation for Schr\"odinger operators with potential in
		Morrey spaces},
	note={Conference on Mathematical Analysis (El Escorial, 1989)},
	journal={Publ. Mat.},
	volume={35},
	date={1991},
	number={1},
	pages={291--298},
	issn={0214-1493},
	review={\MR{1103622}},
}

%\bib{RV2}{article}{
%	author={Ruiz, A.},
%	author={Vega, L.},
%	title={On local regularity of Schr\"odinger equations},
%	journal={Internat. Math. Res. Notices},
%	date={1993},
%	number={1},
%	pages={13--27},
%	issn={1073-7928},
%	review={\MR{1201747}},
%	doi={10.1155/S1073792893000029},
%}

\bib{SamkoN}{article}{
	author={Samko, N.},
	title={Weighted Hardy and singular operators in Morrey spaces},
	journal={J. Math. Anal. Appl.},
	volume={350},
	date={2009},
	number={1},
	pages={56--72},
	issn={0022-247X},
	review={\MR{2476892}},
	doi={10.1016/j.jmaa.2008.09.021},
}

\bib{SawWheed_1992}{article}{
	author={Sawyer, E.},
	author={Wheeden, R. L.},
	title={Weighted inequalities for fractional integrals on Euclidean and
		homogeneous spaces},
	journal={Amer. J. Math.},
	volume={114},
	date={1992},
	number={4},
	pages={813--874},
	issn={0002-9327},
	review={\MR{1175693}},
	doi={10.2307/2374799},
}

\bib{Shen1}{article}{
	author={Shen, Z.},
	title={The periodic Schr\"odinger operators with potentials in the Morrey
		class},
	journal={J. Funct. Anal.},
	volume={193},
	date={2002},
	number={2},
	pages={314--345},
	issn={0022-1236},
	review={\MR{1929505}},
	doi={10.1006/jfan.2001.3933},
}

\bib{strom_1979}{article}{
	author={Str\"omberg, J.-O.},
	title={Bounded mean oscillation with Orlicz norms and duality of Hardy
		spaces},
	journal={Indiana Univ. Math. J.},
	volume={28},
	date={1979},
	number={3},
	pages={511--544},
	issn={0022-2518},
	review={\MR{529683}},
	doi={10.1512/iumj.1979.28.28037},
}

\bib{Tay}{article}{
	author={Taylor, M.E.},
	title={Analysis on Morrey spaces and applications to Navier-Stokes and
		other evolution equations},
	journal={Comm. Partial Differential Equations},
	volume={17},
	date={1992},
	number={9-10},
	pages={1407--1456},
	issn={0360-5302},
	review={\MR{1187618}},
	doi={10.1080/03605309208820892},
}

\bib{torch_2014}{article}{
	author={Torchinsky, A.},
	title={Weighted local estimates for fractional type operators},
	journal={Potential Anal.},
	volume={41},
	date={2014},
	number={3},
	pages={869--885},
	issn={0926-2601},
	review={\MR{3264824}},
	doi={10.1007/s11118-014-9397-6},
}

\end{biblist}
\end{bibdiv}

\end{document}